\newcommand{\R}{\mathbb{R}}
\newcommand{\C}{\mathbb{C}}
\newcommand{\I}{\mathbb{I}}
\newcommand{\settc}[2]{\bigl\{\,#1 \bigm\vert #2\,\bigr\}}
\newcommand{\norm}[1]{\lVert #1 \rVert}
\newcommand{\free}{D_{0}}
\newcommand{\freefw}{D_{_{\text{FW}}}}
\newcommand{\fw}{U_{_{\text{FW}}}}
\newcommand{\Rfw}{U^{R}_{_{\text{FW}}}}
\newcommand{\Vfw}{V^{^{2 \times 2}}_{_\text{FW}}}
\DeclareMathOperator{\supess}{sup\,ess}
\DeclareMathOperator{\esp}{e}
\DeclareMathOperator{\RE}{Re}
\newcommand{\abs}[1]{\lvert #1 \rvert}
\newcommand{\labs}[1]{\left| #1 \right|}
\newtheorem{thm}[equation]{Theorem}
\newtheorem{prop}[equation]{Proposition}
\newtheorem{lem}[equation]{Lemma}
\theoremstyle{remark}
\newtheorem{rem}[equation]{Remark}
\newtheorem*{notation}{Notation}
\numberwithin{equation}{section}
\title[Brown-Ravenhall operator]{A variational approach to the
Brown-Ravenhall operator for the relativistic one-electron atoms}
\author{Vittorio Coti Zelati}
\email[Coti Zelati]{zelati@unina.it}
\address[Coti Zelati]{Dipartimento di Matematica Pura e Applicata 
``R.~Caccioppoli''\\
Universit\`a di Napoli ``Federico II''\\
via Cintia, M.S.~Angelo\\
80126 Napoli (NA), Italy}
\author{Margherita Nolasco}
\email[Nolasco]{nolasco@univaq.it}
\address[Nolasco]{Dipartimento di Ingegneria e Scienze dell' informazione e Matematica ,
Universit\`a dell'Aquila
via Vetoio, Loc. Coppito	
67010 L'Aquila AQ
Italia}
\thanks{Research partially supported by MIUR grant PRIN 201274FYK7, "Variational and perturbative aspects of nonlinear differential problems". One of the Authors (V. Coti Zelati) was also partially supported by Programme STAR, UniNA and Compagnia di San Paolo}
\begin{document}

\begin{abstract} 
We use the  Foldy--Wouthuysen (unitary) transformation to  give an alternative characterization of the eigenvalues and eigenfunctions for the Brown-Ravenhall operator (the projected Dirac operator)  in the case of a one-electron atom. 
In particular we 
 transform  the eigenvalues problem 
into an elliptic problem in the 4-dim half space  $\R^4_{+}$ with  Neumann  boundary condition. 
\end{abstract} 

\maketitle

\section{Introduction and main results}

The Dirac operator is a first order operator acting on 4-spinors $\Psi
\colon \mathbb{R}^{3} \to \mathbb{C}^{4}$, given by
\begin{equation*}
    D_{0}= - i c \hbar \underline{\mathbf{\alpha}} \cdot \nabla + m
    c^2 \mathbf{\beta}
\end{equation*}
where $c$ denotes the speed of light, $m >0$ the mass, $\hbar$ the
Planck's constant  (from now on we choose a system of physical units such that $\hbar =1$), $\mathbf{\alpha}_k $, $k=1,2,3$ and
$\mathbf{\beta}$ are the Pauli-Dirac $4 \times 4$-matrices,
\begin{equation*}
    \mathbf{\beta } = 
    \begin{pmatrix} 
	{\I}_{_2} & 0 \\ 
	0 & - \I_{_2} 
    \end{pmatrix} 
    \qquad \mathbf{\alpha}_k =
    \begin{pmatrix} 
	0 & \mathbf{ \sigma}_k \\ 
	\mathbf{ \sigma}_k & 0
    \end{pmatrix}  \qquad k=1,2,3
\end{equation*}
and $\mathbf{\sigma}_k$ are the Pauli $2\times 2$-matrices given by
\begin{equation*}
    \sigma_{1} =
    \begin{pmatrix}
	0 & 1 \\
        1 & 0
    \end{pmatrix},
    \quad \sigma_{2} =
    \begin{pmatrix}
	0 & -i \\
        i & 0
    \end{pmatrix}
    \quad \sigma_{3} =
    \begin{pmatrix}
	1 & 0 \\
        0 & -1
    \end{pmatrix}.
\end{equation*}

Denoting the Fourier transform (of a function in $u \in
\mathcal{S}(\mathbb{R}^{3})$) by
\begin{equation*}
    \mathcal{F}(u)(p) = \frac{1}{(2\pi)^{3/2}} \int_{\mathbb{R}^{3}}
    e^{-ip \cdot x} u(x) \, dx,
\end{equation*}
the free Dirac operator becomes in (momentum) Fourier space the
multiplication operator $\hat{D}(p) = \mathcal{F} D_{0}
\mathcal{F}^{-1} (p)$ given, for each $p \in \R^3$, by an Hermitian $4
\times 4$-matrix which has the eigenvalues
\begin{equation*}
    \lambda_1(p) = \lambda_2(p) = - \lambda_3(p)= - \lambda_4(p) =
    \sqrt{c^2 |p|^2 + m^2 c^4 } \equiv \lambda(p).
\end{equation*}

The unitary transformation $U(p)$ which diagonalize $\hat{D}(p)$ is
given explicitly by
\begin{align*}
    & U(p) = a_{+} (p) \I_{4} + a_{-} (p) \mathbf{\beta}
    \frac{\underline{\mathbf{\alpha}} \cdot p}{|p|} \\
    &U^{-1}(p) = a_{+} (p) \I_{4} - a_{-} (p) \mathbf{\beta}
    \frac{\underline{\mathbf{\alpha}} \cdot p}{|p|}
\end{align*}
with $a_{\pm}(p) = \sqrt{\frac{1}{2}( 1 \pm mc^2 / \lambda(p))}$
\begin{equation*}
    U(p)\hat{D}(p) U^{-1}(p) = \mathbf{\beta} \lambda(p) =
    \begin{pmatrix} 
	\I_{_2} & 0 \\ 
	0 & - \I_{_2} 
    \end{pmatrix}
    \sqrt{c^2 |p|^2 + m^2 c^4 }
\end{equation*}

We recall here the main properties of the free Dirac operator $\free$.
\begin{prop}[\protect{see \cite[chaper 1]{Thaller_1992}}]
    The free Dirac operator $\free$ is essentially self-adjoint  and self-adjoint on $\mathcal{D}(\free) =
    H^{1}(\mathbb{R}^{3};\mathbb{C}^{4})$.
    
    Its spectrum is purely absolutely continuous and given by
    \begin{equation*}
	\sigma(\free) = (-\infty,-mc^{2}] \cup [mc^{2}, +\infty).
    \end{equation*}
    
    There are two orthogonal projectors on $L^{2}(\mathbb{R}^{3},
    \mathbb{C}^{4})$, both infinite rank,
    \begin{equation*}
	\Lambda_{\pm} = \mathcal{F}^{-1}U(p)^{-1} \left( \frac{\I_{4}
	\pm \mathbf{\beta}}{2} \right) U(p)\mathcal{F}
    \end{equation*}
    and such that
    \begin{equation*}
	\free \Lambda_{\pm} = \Lambda_{\pm} \free = \pm \sqrt{- c^2 \Delta + m^{2}c^{4}} \Lambda_{\pm} = \pm
	\Lambda_{\pm} \sqrt{- c^2 \Delta + m^{2}c^{4}}\I_{4}.
    \end{equation*}
\end{prop}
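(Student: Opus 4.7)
The plan is to reduce everything to a multiplication operator via the Fourier transform and exploit the pointwise diagonalization by $U(p)$ already spelled out in the excerpt. Since $\mathcal{F}\colon L^{2}(\R^{3};\C^{4}) \to L^{2}(\R^{3};\C^{4})$ is unitary and $\free = \mathcal{F}^{-1}\hat D(p)\mathcal{F}$, it suffices to analyze the maximal multiplication operator by the Hermitian matrix-valued symbol $\hat D(p)$. First I would check that the natural domain of this multiplication operator is exactly $\mathcal{F}(H^{1}) = \{\hat u \in L^{2} : (1+|p|)\hat u \in L^{2}\}$, by observing that the operator norm of $\hat D(p)$ satisfies $c|p| \le \|\hat D(p)\| \le c|p| + mc^{2}$, so that $\hat D(p)\hat u \in L^{2}$ if and only if $|p|\hat u \in L^{2}$. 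Self-adjointness on this domain is then the standard statement that multiplication by a (pointwise) Hermitian matrix is self-adjoint on its maximal domain, and essential self-adjointness on $\mathcal{S}(\R^{3};\C^{4})$ follows from the density of Schwartz functions in $H^{1}$.

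For the spectrum, I would conjugate further by $U(p)$, which is unitary for each $p$ and a bounded measurable function of $p$ (it is continuous away from $p=0$ and bounded by $1$), so $\mathcal{F}^{-1}U^{-1}(\cdot)\,\cdot\,U(\cdot)\mathcal{F}$ is a unitary equivalence on $L^{2}(\R^{3};\C^{4})$. Under this equivalence $\free$ becomes multiplication by the diagonal matrix $\mathbf{\beta}\lambda(p)$, i.e.\ two copies of multiplication by $\lambda(p)$ and two copies of multiplication by $-\lambda(p)$ on $L^{2}(\R^{3})$. The spectrum of multiplication by a continuous function on $L^{2}(\R^{3})$ equals the essential range, which here is $[mc^{2},+\infty)$ and $(-\infty,-mc^{2}]$ respectively; absolute continuity follows from the fact that $\lambda\colon \R^{3}\to[mc^{2},+\infty)$ is smooth with nonvanishing gradient on $\{p\neq 0\}$, so the level sets have zero Lebesgue measure and the pushforward of Lebesgue measure is absolutely continuous (a co-area / change-of-variables argument using spherical coordinates makes this explicit).

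Finally, for the projectors, the matrix $\frac{\I_{4} \pm \mathbf{\beta}}{2}$ is manifestly an orthogonal projector of rank $2$ on $\C^{4}$, so $\Lambda_{\pm} = \mathcal{F}^{-1}U^{-1}\frac{\I_{4}\pm\mathbf{\beta}}{2}U\mathcal{F}$ is an orthogonal projector on $L^{2}(\R^{3};\C^{4})$ as a composition of unitaries with a pointwise orthogonal projection. Its range has infinite dimension because in the unitarily equivalent picture it coincides with $L^{2}(\R^{3};\C^{2})\oplus\{0\}$ (resp.\ $\{0\}\oplus L^{2}(\R^{3};\C^{2})$). The intertwining relations $\free\Lambda_{\pm} = \Lambda_{\pm}\free = \pm\sqrt{-c^{2}\Delta+m^{2}c^{4}}\,\Lambda_{\pm}$ reduce, after conjugation by $U\mathcal{F}$, to the trivial commutation of the diagonal matrices $\mathbf{\beta}\lambda(p)$ and $\frac{\I_{4}\pm\mathbf{\beta}}{2}$ together with the identity $\mathbf{\beta}\frac{\I_{4}\pm\mathbf{\beta}}{2} = \pm\frac{\I_{4}\pm\mathbf{\beta}}{2}$, and the observation that $\sqrt{-c^{2}\Delta+m^{2}c^{4}}$ is by definition the Fourier multiplier by $\lambda(p)$.

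The only point that requires more than bookkeeping is the absolute continuity of the spectrum; everything else is a routine transfer through the unitary $\mathcal{F}$ and the pointwise unitary $U(p)$, and I would refer to Thaller's monograph for the detailed verifications.
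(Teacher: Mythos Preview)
The paper does not prove this proposition at all: it is stated with the attribution ``see \cite[chapter 1]{Thaller_1992}'' and no argument is given. Your sketch is correct and is precisely the standard route taken in Thaller's monograph (Fourier transform to a matrix multiplication operator, pointwise diagonalization by $U(p)$, reduction to scalar multiplication by $\pm\lambda(p)$), so there is nothing to compare beyond noting that you have supplied what the paper deliberately outsourced.
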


The operator $\sqrt{-c^{2}\Delta + m^2c^{4}}$ can be defined
for all $f \in H^{1}(\mathbb{R}^{N})$ as the inverse Fourier transform
of the $L^{2}$ function $\sqrt{c^{2}|p|^{2} + m^2c^{4}} \, \hat{f}
(p)$ (where $\hat{f} = \mathcal{F}(f)$).

To such an operator we can also associate the following quadratic form
\begin{equation*}
    \mathcal{Q}(f,g) = \int_{\mathbb{R}^{N}} \sqrt{ c^{2}|p|^{2} +
    m^2c^{4}} \, \, \hat{f} (p) \hat{g}(p) \, dp
\end{equation*}
which can be extended to all functions $f, g \in
H^{1/2}(\mathbb{R}^{N})$ where
\begin{equation*}
    H^{1/2}(\mathbb{R}^{N})= \settc{f \in L^{2}(\mathbb{R}^{N})}
    {\int_{\mathbb{R}^{N}} (1 + | p|) |\hat{f} (p)|^{2} \, dp <
    +\infty}.
\end{equation*}
see for example \cite{LiebLoss} for more details.

We now introduce the \textbf{Foldy-Wouthuysen} transformation, given
by the unitary transformation $\fw = \mathcal{F}^{-1} U(p)
\mathcal{F}$ which transforms the free Dirac operator into the $2
\times 2$-block form $D_{_\text{FW}} = \fw D_{0}\fw^{-1}$
\begin{equation*}
    D_{_\text{FW}}= 
    \begin{pmatrix} 
	\sqrt{- c^2 \Delta + m^2 c^4 } \, \I_{_2} & 0 \\
	0& - \sqrt{- c^2 \Delta + m^2 c^4 } \, \I_{_2}
    \end{pmatrix} 
\end{equation*}

Under this transformation the projectors $\Lambda_{\pm}$ become simply
\begin{equation*}
    \fw \Lambda_{\pm} \fw^{-1} = \frac{\I_{4} \pm \mathbf{\beta}}{2}
\end{equation*}
therefore the positive and negative energy subspaces for $\freefw$,
are simply given by
\begin{align*}
    &\mathcal{H}_{+} = \settc{\Psi = \left(
    \begin{smallmatrix}
	\psi\\
	0
    \end{smallmatrix}
    \right) \in L^{2}(\mathbb{R}^{3},\mathbb{C}^{4}) }{ \psi \in
    L^{2}(\mathbb{R}^{3},\mathbb{C}^{2})}\\
    &\mathcal{H}_{-} = \settc{\Psi = \left(
    \begin{smallmatrix}
	0 \\
	\psi 
    \end{smallmatrix}
    \right) \in L^{2}(\mathbb{R}^{3},\mathbb{C}^{4}) }{ \psi \in
    L^{2}(\mathbb{R}^{3},\mathbb{C}^{2})}.
\end{align*}

We are interested in perturbed Dirac operators $D_{0} + V$, $V$ being
a scalar potential satisfying
\begin{enumerate}
    \item[(\textbf{h1})] $V \in {L_{w}^{3}(\R^3)} + L^{\infty}(\R^3)$;
    
    \item[(\textbf{h2})] there exists $a \in (0,1)$ such that
    \begin{equation*}
	\left| \left(\Lambda_{+} \phi, V \Lambda_{+} \phi
	\right)_{_{L^2}} \right| \leq a \left(\Lambda_{+} \phi, D_{0}
	\, \Lambda_{+} \phi \right)_{_{L^2}}
    \end{equation*}
    for all $\phi \in H^{1/2}(\R^3; \mathbb{C}^{4})$;
    
    \item[(\textbf{h3})] $V \in L^{\infty}(\R^3 \setminus
    \overline{B}_{R_0})$ for some $R_0>0$ and
    \begin{align*}
	(i) & \lim_{R \to + \infty} \| V \|_{L^{\infty}(|x| >R)} =
	0 ; \\
	(ii) & \lim_{R \to + \infty} \supess_{|x| >R} V (x) |x|^2 = -
	\infty .
    \end{align*}
\end{enumerate}

\begin{rem}
    We recall that $L^q_{w}(\mathbb{R}^N)$, the weak $L^{q}$ space, is
    the space of all measurable functions $f$ such that
    \begin{equation*}
	\sup_{\alpha >0 } \alpha | \settc{ x }{|f(x)| > \alpha}|^{1/q}
	< + \infty,
    \end{equation*}
    where $|E|$ denotes the Lebesgue measure of a set $E \subset
    \mathbb{R}^N $.  Note that $ f(x)= |x|^{-1}$ does not belong to
    any $L^q$-space but it belongs to $ L^3_{w}(\mathbb{R}^3)$.  (see
    e.g.~\cite{LiebLoss} for more details).
\end{rem}

\begin{rem}
    The validity of (\textbf{h2}) when $V$ is the Coulomb potential
    \begin{equation}
	\label{eq:coulomb}
	V(x) = -\frac{Ze^{2}}{\abs{x}} \quad \text{\emph{(in cgs units)}}
    \end{equation}
 follows from important inequalities.  Let us recall
    them here.
    \begin{description}
	\item[Hardy] for all $\psi \in H^1(\R^3)$
	\begin{equation*}
	    \| |x|^{-1} \psi \|_{_{L^2}} \leq 2 \| \nabla \psi
	    \|_{_{L^2}} \leq \frac{2}{c} \| \sqrt{-c^{2} \Delta +
	    m^{2}c^{4}} \psi \|_{_{L^2}}
	\end{equation*}

	\item[Kato, Herbst \cite{Herbst_1977}] for all $\psi \in
	H^{1/2}(\R^3) $
	\begin{equation*}
	    \left( \psi, |x|^{-1} \psi \right)_{_{L^2}} \leq
	    \frac{\pi}{2} \left(\psi , \sqrt{-\Delta} \psi
	    \right)_{_{L^2}} \leq \frac{\pi}{2c} \left(\psi ,
	    \sqrt{-c^{2} \Delta + m^{2}c^{4}} \psi
	    \right)_{_{L^2}}
	\end{equation*}

	\item[Tix \cite{Tix_1998}] for all $\psi \in H^{1/2}(\R^3,
	\C^4) $
	\begin{equation*}
	    \left(\Lambda_{+} \psi, |x|^{-1} \Lambda_{+} \psi
	    \right)_{_{L^2}} \leq \frac{1}{2c}{(\frac{\pi}{2} +
	    \frac{2}{\pi})} \left(\Lambda_{+} \psi ,
	    \sqrt{-c^{2} \Delta + m^{2}c^{4}}	    \Lambda_{+} \psi \right)_{_{L^2}}.
	\end{equation*}
    \end{description}
Note that  
    (\textbf{h2}) is satisfied for the electrostatic potential
    provided $0 < Z < 68$ by Hardy, $0 < Z < 87$ by Kato and $0 < Z < Z_{c} = 124$ by Tix's
    inequality.
    
\end{rem}

Many efforts have been devoted to the characterization and computation
of the eigenvalues for the Dirac-Coulomb Hamiltonian (i.e.~the
operator $\free + V$ when $V$ is given by \eqref{eq:coulomb}), see
\cite{Dolbeault_Esteban_Sere_2000} and references therein.  

Due to the unboundedness of the spectrum of the free Dirac operator,
attention has been given also to approximate Hamiltonians constructed
by using projectors.  One of the first attempts in this direction was
made by Brown and Ravenhall \cite{Brown_Ravenhall_1951}.

The Brown-Ravenhall Hamiltonian is defined as
\begin{equation*}
    \mathcal{B} = \Lambda_{+} (D_{0} -\frac{Ze^{2}}{\abs{x}} )
    \Lambda_{+}.
\end{equation*}
This Hamiltonian $\mathcal{B}$ has been considered also in the study
of the ``stability of matter'' for relativistic multi-particle systems
(see \cite{Lieb_Siedentop_Solovej_1997}).

In \cite{Evans_Perry_Siedentop_1996} it is proved that the operator
$\mathcal{B} $ is bounded from below if and only if $Z \leq Z_{c}$.
Then, Tix in \cite{Tix_1998} (see also \cite{Burenkov_Evans_1998})
proved that the operator $ \mathcal{B} $ is strictly positive for $Z
\leq Z_{c}$.

Under our assumptions the quadratic form associate to $\mathcal{B}
=\Lambda_{+} (D_{0} + V) \Lambda_{+}$ is positive definite.  Hence,
by the Friedrichs extension theorem, $\mathcal{B}$ can be defined as a
unique self-adjoint positive operator with domain contained in the
form domain $\mathcal{Q}(|\free|) = H^{1/2}(\mathbb{R}^3,
\mathbb{C}^{4})$.  Moreover, by the KLMN theorem, $\mathcal{B}$ may
also be defined via quadratic forms as a form sum with form domain
$\mathcal{Q}(\mathcal{B}) = H^{1/2}(\mathbb{R}^3, \mathbb{C}^{4})$.
The resulting self-adjoint extensions are equal (see
\cite{Reed_Simon_II_75}).  Hence
\begin{equation*}
    \Lambda_{+} (\free + V) \Lambda_{+} = \Lambda_{+} \free\Lambda_{+}
    + \Lambda_{+} V \Lambda_{+} = \Lambda_{+} \sqrt{- c^{2} 
    \Delta + m^{2}c^{4}} \Lambda_{+} + \Lambda_{+} V \Lambda_{+},
\end{equation*}

\begin{rem}
    \label{rem:assumptions}
    The assumptions (h1)-(h2) and (h3) are very similar to the ones
    given in \cite{Dolbeault_Esteban_Sere_2000}.  Our assumption (h2)
    is slight more restrictive and it allows us to apply the KLMN
    theorem.
\end{rem}

Follows from \eqref{eq:estimate_weakspace} below  that $V$ is a compact
operator from $H^1$ to $H^{-1}$ (but not from $H^{1/2}$ to
$H^{-1/2}$), and this is enough to guarantee that the perturbation
$\Lambda_{+}V\Lambda_{+}$ does not modify the essential spectrum.
Namely, $\sigma_{ess}(\mathcal{B}) = [mc^2, + \infty)$ (see
\cite[Corollary 4 to Weyl's essential spectrum theorem
XIII.14]{Reed_Simon_IV_78}).
 
\begin{notation}
    To simplify the notation we will denote simply with $H^{1/2}$ the
    Hilbert space $H^{1/2}(\mathbb{R}^{3},\mathbb{C}^{2})$, with
    $L^{2}$ the space $L^{2}(\mathbb{R}^{3},\mathbb{C}^{2})$  or $L^{2}(\mathbb{R}^{3},\mathbb{C}^{4})$ as appropriate,  and with
    $H^{1}$ the space $H^{1}(\mathbb{R}^{4}_{+},\mathbb{C}^{2})$ where
    $\mathbb{R}^{4}_{+} = \settc{(x,y_{1},\ldots,y_{3}) \in
    \mathbb{R}^{4}}{x > 0}$.
\end{notation}

In the FW-representation (since $ \fw \Lambda_{+} \fw^{-1} =
\frac{1}{2}(\I_{_4} \pm \mathbf{\beta})$) the associated quadratic
form acting on $\mathcal{H}_{+} $, reduces to $ 2 \times 2
$-(Hermitian) matrix form with domain $\mathcal{Q}(
\mathcal{B}_{_\text{FW}}) = H^{1/2}(\mathbb{R}^3, \mathbb{C}^{2})$ and
for any $ \psi, \phi \in \mathcal{Q}( \mathcal{B}_{_\text{FW}}) $ is
defined by
\begin{equation*}
    (\phi, \mathcal{B}_{_\text{FW}} \psi )_{L^2} = ( \phi,
    \sqrt{-c^{2}\Delta + m^{2}c^{4}} \I_{_2} \psi
    )_{L^2} + ( \phi, V^{^{2 \times
    2}}_{_\text{FW}} \psi )_{L^2}
\end{equation*}
where
\begin{align*}
    &V^{^{2 \times 2}}_{_\text{FW}} \psi = Q^{*}\fw V\fw^{-1} Q \psi, 
    \qquad \psi \in H^{1/2}, \\
    &Q \colon \mathbb{C}^{2} \to \mathbb{C}^{4}, \qquad Q(z_{1},
    z_{2}) = (z_{1}, z_{2}, 0, 0) \\
    &Q^{*} \colon \mathbb{C}^{4} \to \mathbb{C}^{2}, \qquad
    Q^{*}(z_{1}, z_{2}, z_{3}, z_{4}) = (z_{1}, z_{2})
\end{align*}
so that
\begin{align}
    \label{eq:D0inR2}
    ( \phi, \sqrt{-c^{2} \Delta + m^{2}c^{4}} \I_{_2} \psi
    )_{L^2} &= (\Lambda_{+} \fw^{-1} Q \phi ,
    D_{0} \Lambda_{+} \fw^{-1} Q \psi )_{_{L^2(\R^3, \C^{4})}} \\
    &= \left(\Lambda_{+} \fw^{-1}
    \begin{pmatrix} 
	\phi \\ 0
    \end{pmatrix}
    , D_{0} \Lambda_{+} \fw^{-1}
    \begin{pmatrix}
	\psi \\ 0
    \end{pmatrix}
    \right)_{_{L^2(\R^3, \C^{4})}}.\notag
\end{align}
and
\begin{align}
    \label{eq:VinR2}
    ( \phi, V^{^{2 \times 2}}_{_\text{FW}} \psi )_{L^2} &= (\fw^{-1} Q
    \phi , V \fw^{-1} Q \psi )_{_{L^2(\R^3, \C^{4})}} \\
    &= \left(\Lambda_{+}\fw^{-1}
    \begin{pmatrix} 
	\phi \\ 0
    \end{pmatrix}
    , V \Lambda_{+}\fw^{-1}
    \begin{pmatrix}
	\psi \\ 0
    \end{pmatrix}
    \right)_{_{L^2(\R^3, \C^{4})}}.\notag
\end{align}
Note that $ \fw^{-1}Q\varphi = \Lambda_{+} \fw^{-1}Q\varphi \in
\Lambda_{+} L^{2}(\R^3, \C^{4}) $ for any $ \varphi \in L^{2}$.

The operator $\sqrt{-c^{2} \Delta + m^{2}c^{4}}$, exactly as
for the fractional Laplacian, can be related to a Dirichlet to Neumann
operator (see for example \cite{CabreMorales05} for problems involving
the fractional laplacian, and \cite{CZNolasco2011, CZNolasco2013} for
more closely related models).

For any given function $u \in \mathcal{S}(\R^3)$ we  consider the
following Dirichlet boundary problem
\begin{equation*}
    \begin{cases}
	- \partial^{2}_{x}v - c^{2}  \Delta_{y} v + m^{2}
	c^{4} v = 0 & \text{in } \mathbb{R}^{4}_{+} = \settc{(x,y) \in
	\mathbb{R} \times \mathbb{R}^{3}}{x > 0 } \\
	v(0,y) = u(y) &\text{for } y \in \mathbb{R}^{3} = \partial
	\mathbb{R}^{4}_{+}.
    \end{cases}
\end{equation*}
Solving the equation via Fourier transform (w.r.t. $y \in
\R^3$) we get
\begin{equation*}
    v (x,y) = \frac{1}{(2 \pi)^{3/2}} \int_{\R^3} \text{e}^{ i p
    \cdot y} \hat{u}(p) \text{\rm e}^{- \sqrt{ c^{2}|p|^2 + m^2c^{4}} 
    x} \, dp. 
\end{equation*}

Let us define
\begin{equation*}
    \mathcal{T} u (y) = \frac{\partial v}{\partial \nu}(0,y) = -
    \frac{\partial v}{\partial x}(0,y);
\end{equation*}
hence 
\begin{equation*}
    \mathcal{T} u (y) = - \frac{\partial v}{\partial x}(0,y) =
    \frac{1}{(2 \pi)^{3/2}} \int_{\R^3} \text{e}^{ i p \cdot y} \sqrt{
    c^{2}|p|^2 + m^2c^{4}} \, \hat{u}(p) \, dp
\end{equation*}
namely $\mathcal{T} = \sqrt{-c^{2} \Delta_{y} + m^{2}c^{4}}$
on the dense domain $\mathcal{S}(\mathbb{R}^{3})$.

Our aim is to prove a variational characterization of the eigenvalues
and eigenvectors of $\mathcal{B}_{_\text{FW}} $ different from the
classical Rayleigh quotient and which gives rise, as we will see
later, to an alternative eigenvalues problem (see
($\mathcal{E}_{_{k}}$) below) for $\mathcal{B}_{_\text{FW}} $
involving the Dirichlet to Neumann operator.  We believe that such a
characterization can be useful for a finer analysis of the properties
---such as regularity and exponential decay--- of eigenfunctions,
which have been object of investigation with different techniques in
\cite{Bach_Matte_2001}.

We consider the auxiliary functional $\mathcal{I} (\phi)$ defined on
$H^{1}(\R^4_{+},\mathbb{C}^2)$
\begin{equation*}
    \mathcal{I} (\phi) = \iint_{\R^4_{+}} (|\partial_{x} \phi |^2 +
    c^{2}  |\nabla_{y} \phi |^2 + m^2 c^4 |\phi |^2 ) \, dx\,dy
    + \int_{\R^3} ( \phi_{_{tr}}, V^{^{2 \times 2}}_{_\text{FW}}
    \phi_{_{tr}} )_{_{\C^2}} \, dy
\end{equation*}
where $\phi_{_{tr}} \in H^{1/2}$ denotes the
trace of $\phi \in H^{1}$ on $\partial
\mathbb{R}^{4}_{+} = \mathbb{R}^{3}$.
 
We have the following result.
 
\begin{thm}
    \label{thm:eigenvalues}
    Let (h1)-(h2)-(h3) hold.  Then there exist $\lambda_{1} \leq
    \lambda_{2} \leq \ldots \leq \lambda_{k} \leq \ldots$ and
    $\phi_{1}, \phi_{2}, \ldots, \phi_{k}, \ldots \in
    H^{1}(\R^4_{+},\mathbb{C}^2)$ such that, for all $k \in
    \mathbb{N}$
    \begin{equation*}
	\lambda_{k} = \mathcal{I} (\phi_{k}) = \inf_{X_{k} }
	\mathcal{I} (\phi)
    \end{equation*}
    where 
    \begin{equation*}
	X_{1} = \settc{\phi \in H^{1}}{\abs{\phi_{\text{tr}}}_{L^{2}}
	= 1} .
    \end{equation*}
    and,  for $ 1 < k \in \mathbb{N}$
    \begin{equation*}
	X_{k} = \settc{\phi \in H^{1}}{\abs{\phi_{\text{tr}}}_{L^{2}}
	= 1, \ (\phi_{\text{tr}},  (\phi_i)_{tr})_{L^{2}} = 0, \ i = 1,
	\ldots, k-1} .
    \end{equation*}

    Moreover $ \{ \lambda_{k} \}_{_{k \geq 1}} \in
    \sigma_{\text{disc}}(\mathcal{B}_{_\text{FW}}) =
    \sigma_{\text{disc}}(\mathcal{B})$ and
    \begin{equation*}
	0 < \lambda_1 \leq \ldots \leq \lambda_k \leq \lambda_{k + 1}
	\, \to \, \inf \{ \sigma_{\text{ess}}
	(\mathcal{B}_{_\text{FW}}) \} = mc^2 \quad \text{for} \, \, k
	\to + \infty .
    \end{equation*}
    The corresponding eigenfunctions are $\varphi_k = (\phi_k)_{tr}
    \in H^{1/2}(\R^3, \mathbb{C}^{2})$, and  $\phi_k \in
    H^{1}(\R^4_{+},\mathbb{C}^2)$ are weak solution of the Neumann
    problem
    \begin{equation*}
	\tag{$\mathcal{E}_{_{k}}$}
	\begin{cases}
	    -\partial^2_x \phi_k -  c^2 \Delta_{y} \phi_k +
	    m^{2}c^4 \phi_k= 0 & \quad \text{in } \mathbb{R}^{4}_{+}
	    \\
	    \displaystyle{ \frac{\partial \phi_k}{\partial \nu} } +
	    V^{^{2 \times 2}}_{_\text{FW}} \varphi_k = \lambda_k
	    \varphi_k & \quad \text{on } \partial\mathbb{R}^{4}_{+} =
	    \mathbb{R}^{3}.
	\end{cases}
    \end{equation*}
\end{thm}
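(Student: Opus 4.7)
The plan is to identify the infimum of $\mathcal{I}$ on each constraint set $X_k$ with the $k$-th Courant-Fischer-Weyl min-max level of $\mathcal{B}_{_\text{FW}}$, using the Dirichlet extension as a bridge. For $\varphi \in H^{1/2}(\R^3,\C^2)$ let $E\varphi \in H^{1}(\R^4_{+},\C^2)$ denote the solution of $-\partial_x^2 v - c^2 \Delta_y v + m^2c^4 v = 0$ in $\R^4_{+}$ with $v(0,\cdot)=\varphi$, explicitly given in Fourier variables by the formula recalled above for $v(x,y)$. The central ingredient is the extension identity
\[
    \bigl(\varphi,\sqrt{-c^2 \Delta + m^2c^4}\,\varphi\bigr)_{L^2} = \min_{\substack{\phi \in H^{1}(\R^4_{+},\C^2)\\ \phi_{\text{tr}}=\varphi}} \iint_{\R^4_{+}} \bigl(\abs{\partial_x \phi}^2 + c^2 \abs{\nabla_y \phi}^2 + m^2c^4 \abs{\phi}^2\bigr)\,dx\,dy ,
\]
the minimum being attained \emph{uniquely} at $\phi = E\varphi$. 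I would prove this by the orthogonal decomposition $\phi = E\varphi + w$ with $w$ of vanishing trace, observing that the cross term vanishes after integrating by parts in $x$ (using the PDE satisfied by $E\varphi$ and the Dirichlet condition $w(0,\cdot)=0$), so that the quadratic form splits as a sum of squares. Combined with \eqref{eq:D0inR2}, and noting that the potential part of $\mathcal{I}$ depends only on $\phi_{\text{tr}}$, this yields $\mathcal{I}(\phi) \geq (\varphi, \mathcal{B}_{_\text{FW}} \varphi)_{L^2}$, with equality precisely at $\phi = E\varphi$.

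Thanks to this reduction, $\inf_{X_k} \mathcal{I}$ coincides with the $k$-th min-max value of $\mathcal{B}_{_\text{FW}}$ on $\settc{\varphi \in H^{1/2}}{\norm{\varphi}_{L^2}=1,\ \varphi\perp \varphi_1,\dots,\varphi_{k-1}}$. Since $\mathcal{B}_{_\text{FW}}$ is self-adjoint, bounded below, and has $\sigma_{\text{ess}}(\mathcal{B}_{_\text{FW}}) = [mc^2,+\infty)$ as recalled from Weyl's theorem, the general min-max principle (e.g.\ Reed-Simon IV, Theorem XIII.1) asserts that every variational level strictly below $mc^2$ is attained as a discrete eigenvalue. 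The corresponding minimizer $\varphi_k \in H^{1/2}$ produces $\phi_k := E\varphi_k \in H^{1}(\R^4_{+},\C^2)$, which by construction realises $\inf_{X_k}\mathcal{I}$; the orthogonality constraints pass to the limit by continuity of the trace map $H^{1}(\R^4_{+})\to L^{2}(\R^3)$. The Euler-Lagrange equations on $X_k$ then close the argument: variations in $C^\infty_c(\R^4_{+},\C^2)$ give the interior elliptic equation in $(\mathcal{E}_{k})$, while variations in $C^\infty_c(\overline{\R^4_{+}},\C^2)$ combined with Green's identity yield the Neumann boundary condition $\partial_\nu \phi_k + \Vfw \varphi_k = \lambda_k \varphi_k$, with $\lambda_k$ the Lagrange multiplier of the $L^2$-normalization (the multipliers for the orthogonality constraints vanish because $\varphi_k$ is an eigenfunction of the self-adjoint $\mathcal{B}_{_\text{FW}}$ and is automatically $L^2$-orthogonal to the previous eigenfunctions).

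The main obstacle is to prove that the min-max levels $\lambda_k$ lie \emph{strictly} below $mc^2$ for \emph{every} $k \in \mathbb{N}$, so that all of them are genuine elements of $\sigma_{\text{disc}}(\mathcal{B}_{_\text{FW}})$ and $\lambda_k \to mc^2$. This is delicate because, as the authors point out, $V$ is \emph{not} relatively form-compact on $H^{1/2}$, so the existence of infinitely many discrete eigenvalues is not automatic and must be extracted from the long-range behaviour of $V$. This is precisely the role of (h3)(ii): given any $M>0$, one has $\supess_{\abs{x}>R} V(x)\abs{x}^2 \leq -M$ for $R$ large, and by a standard test-function construction (scaling trial functions supported in a far-out shell of $\R^3$ to produce a $k$-dimensional subspace on which $\sqrt{-c^2\Delta+m^2c^4}$ contributes $mc^2$ plus an error of order $1/R^2$, absorbed by the $-M/\abs{x}^2$ potential term) one obtains, for each $k$, a $k$-dimensional subspace of $H^{1/2}$ on which the Rayleigh quotient of $\mathcal{B}_{_\text{FW}} - mc^2$ is negative. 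Combined with the min-max principle, this forces $\lambda_k < mc^2$ and, through (h3)(i), $\lambda_k \to mc^2$; the identification $\sigma_{\text{disc}}(\mathcal{B}_{_\text{FW}}) = \sigma_{\text{disc}}(\mathcal{B})$ follows from the unitary equivalence of the two operators on $\Lambda_+ L^2$ implemented by $\fw$.
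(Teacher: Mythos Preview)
Your reduction strategy is sound and follows a genuinely different route from the paper. The authors do not invoke the abstract min--max principle for $\mathcal{B}_{_\text{FW}}$ at all; they work directly on the extended functional $\mathcal{I}$ in $H^{1}(\R^4_{+})$, produce constrained Palais--Smale minimizing sequences on each $X_k$, and recover compactness by a localisation argument: after cutting off by a radial $\chi_{_R}$, the error terms are controlled through the commutator estimate $\|[\chi_{_R},\fw^{-1}]\fw\|_{H^{1/2}\to H^{1/2}} = O(R^{-1})$ proved in their Appendix~B, and they show that if the traces converge weakly to zero then $(\varphi_n,\Vfw\varphi_n)\to 0$, contradicting $\lambda_k < mc^2$. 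Your approach is shorter and leans on ready-made spectral theory; theirs is more self-contained within the $\R^4_{+}$ framework and keeps the Neumann problem visible throughout, which is after all the declared purpose of the article.

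There is, however, a real gap in your sketch of the inequality $\lambda_k < mc^2$. You propose trial functions ``supported in a far-out shell'' and invoke the pointwise bound $V(y)\leq -M/|y|^2$ from (h3)(ii). This would be fine if the potential in $\mathcal{B}_{_\text{FW}}$ were the multiplication operator $V$, but it is $\Vfw = Q^{*}\fw V\fw^{-1}Q$, which is \emph{non-local}: for $\varphi$ supported in $\{|y|>R\}$ the spinor $\fw^{-1}Q\varphi$ is not supported there, so $(\varphi,\Vfw\varphi)$ does not see only the long-range tail of $V$, and your ``absorbed by the $-M/|x|^2$ potential term'' step does not follow. The paper meets exactly this difficulty in its proof of $\lambda_1 < mc^2$: using the dilation $\varphi_\eta(y)=\eta^{3/2}\varphi(\eta y)$ with $\varphi\in C_0^\infty$, they show via a commutator computation with the rescaled transform $U_{_{\text{FW},\eta}} = \mathcal{F}^{-1}U(\eta p)\mathcal{F}$ that
\[
(\varphi_\eta,\Vfw\varphi_\eta)_{L^2} = (\varphi,V(\eta^{-1}y)\varphi)_{L^2} + O(\eta^2),
\]
whence (h3)(ii) forces $\limsup_{\eta\to 0^+}\eta^{-2}(\varphi_\eta,\Vfw\varphi_\eta)=-\infty$. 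For the higher $\lambda_k$ the same estimate is run uniformly over a fixed $k$-dimensional subspace $G_k\subset C_0^\infty(\R^3;\C^2)$ by compactness of $G_k\cap S$. Your construction can be salvaged along the same lines, but the commutator bound is not ``standard'' and must be supplied.
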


\section{Proof of Theorem \ref{thm:eigenvalues}}

It is convenient to introduce the following (equivalent) norm in
$H^{1}(\R^4_{+},\mathbb{C}^2)$
\begin{equation*}
    \|\phi \|^2_{_{H^1}} = \iint_{\R^4_{+}}
    (|\partial_{x} \phi |^2 + c^{2}  |\nabla_{y}
    \phi |^2+ m^2 c^4 |\phi |^2) \, dx\,dy .
\end{equation*}

The following property   can be easily verified
\begin{lem}
    \label{lem:betterminimizer}
    For $w \in H^{1}(\mathbb{R}^{4}_{+})$, let $u = w_{\text{tr}} \in
    H^{1/2}(\mathbb{R}^{3})$ be the trace of $w$, $\hat{u} =
    \mathcal{F}(u)$ and
    \begin{equation*}
	v(x,y) = \mathcal{F}^{-1}_{y}\bigl[\hat{u}(p ) e^{-\sqrt{
	c^{2}|p|^2 + m^2c^{4}}x}\bigr].
    \end{equation*}
    
    Then $v \in H^{1}(\mathbb{R}^{4}_{+})$, $\| v 
    \|_{H^1(\mathbb{R}^{4})} = \|u
    \|_{H^{1/2}(\mathbb{R}^{3})}$, and
    \begin{align*}
	\int_{\R^3} \sqrt{ c^{2}|p|^2 + m^2c^{4}} \,|\hat{u}|^2 \, dp
	& = \iint_{\R^4_{+}} (|\partial_{x} v |^2 + c^{2} 
	|\nabla_{y} v |^2 + m^2 c^4 |v |^2 )\, dx\,dy \\
	& \leq \iint_{\R^4_{+}} (|\partial_{x} w |^2 + c^{2} 
	|\nabla_{y} w |^2 + m^2 c^4 |w |^2 )\, dx\,dy.
    \end{align*}
\end{lem}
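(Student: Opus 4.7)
The plan is to reduce everything to a direct computation on the Fourier side in the tangential variable $y$, and then obtain the minimizing property by the standard orthogonality argument for Dirichlet extensions.

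Write $\lambda(p) := \sqrt{c^{2}|p|^{2} + m^{2}c^{4}}$, so that the partial Fourier transform of $v$ in $y$ reads $\hat v(x,p) = \hat u(p)\esp^{-\lambda(p)x}$. Differentiating term by term and applying Plancherel in $y$ yields, for each fixed $x > 0$,
\begin{equation*}
    \int_{\R^{3}}\!\bigl(\abs{\partial_{x}v}^{2} + c^{2}\abs{\nabla_{y}v}^{2} + m^{2}c^{4}\abs{v}^{2}\bigr)\,dy
    = \int_{\R^{3}}\!\bigl(\lambda(p)^{2} + c^{2}\abs{p}^{2} + m^{2}c^{4}\bigr)\abs{\hat u(p)}^{2}\esp^{-2\lambda(p)x}\,dp,
\end{equation*}
and the bracket collapses to $2\lambda(p)^{2}$ since $\lambda(p)^{2} = c^{2}\abs{p}^{2}+m^{2}c^{4}$. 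Then Fubini plus the elementary identity $\int_{0}^{\infty}2\lambda^{2}\esp^{-2\lambda x}\,dx = \lambda$ give
\begin{equation*}
    \|v\|_{H^{1}}^{2} \;=\; \int_{\R^{3}}\lambda(p)\abs{\hat u(p)}^{2}\,dp \;=\; \|u\|_{H^{1/2}}^{2},
\end{equation*}
which simultaneously proves $v\in H^{1}(\R^{4}_{+})$, the norm identity, and the first equality of the displayed chain.

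For the inequality I would set $\phi := w - v \in H^{1}(\R^{4}_{+})$; since $v$ and $w$ share the same trace, $\phi_{\text{tr}} = 0$. Expanding the quadratic energy yields
\begin{equation*}
    \|w\|_{H^{1}}^{2} - \|v\|_{H^{1}}^{2} = \|\phi\|_{H^{1}}^{2} + 2\RE\!\iint_{\R^{4}_{+}}\!\bigl(\partial_{x}v\,\overline{\partial_{x}\phi} + c^{2}\nabla_{y}v\cdot\overline{\nabla_{y}\phi} + m^{2}c^{4}\,v\,\bar\phi\bigr)\,dx\,dy,
\end{equation*}
and since the first term is nonnegative it suffices to show that the cross term vanishes. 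But $v$ was built exactly as the Fourier-side solution of $-\partial_{x}^{2}v - c^{2}\Delta_{y}v + m^{2}c^{4}v = 0$ on $\R^{4}_{+}$, and integration by parts (in $x$ there is no contribution at $x=+\infty$ by the exponential decay of $\hat v(x,p)$, and the contribution at $x=0$ is killed by $\phi_{\text{tr}}=0$; in $y$ there is no boundary) converts the cross term into $\iint(-\partial_{x}^{2}v - c^{2}\Delta_{y}v + m^{2}c^{4}v)\bar\phi\,dx\,dy = 0$.

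The one point to be careful about is the legitimacy of these integrations by parts at the $H^{1}$ level of regularity. The clean way is to prove the identity first for $u\in\mathcal{S}(\R^{3})$, where $\hat v(x,p)$ decays rapidly in both $x$ and $p$ and classical integration by parts applies, and then pass to the limit by density using the norm equality established in the first step, which makes the map $u\mapsto v$ an isometry from $H^{1/2}(\R^{3})$ into $H^{1}(\R^{4}_{+})$ and therefore continuous. The inequality for general $w$ follows by the same density argument applied to $\phi = w - v$, using that $\{\phi\in H^{1}(\R^{4}_{+}) : \phi_{\text{tr}} = 0\}$ is the closure of $C^{\infty}_{c}(\R^{4}_{+})$ in $H^{1}$.
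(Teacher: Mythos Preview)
Your proof is correct and complete. The paper itself does not supply a proof: it simply introduces the lemma with the phrase ``The following property can be easily verified'' and moves on. What you have written is precisely the standard verification one would expect---Plancherel in $y$ plus $\int_{0}^{\infty}2\lambda^{2}\esp^{-2\lambda x}\,dx=\lambda$ for the equality, and the orthogonality of the harmonic extension against zero-trace perturbations for the inequality---so there is nothing to contrast.
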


Let introduce also  the following  norm in the weak $L^q$-space:
\begin{equation*}
    | f |_{_{L^{q}_{w}}} = \sup \settc{ |A| ^{- 1/r} \int_{A} |f(x)| \, dx
    }{A \subset \mathbb{R}^{3}, \text{measurable},\ 0<\abs{A}
    <+\infty}
\end{equation*}
where $1/q + 1/r = 1$.

We have  the following fact:

\begin{lem}
    \label{lem:estimate_weakspace}
    Let $V \in L_{ w}^{3}(\R^3)$ and $f \in H^{1/2} (\R^3)$.  
    
    We have that
    \begin{equation}
	\label{eq:estimate_weakspace}
	|V^{1/2} f |_{_{L^2}} \leq C | V |^{1/2}_{_{L^3_{w}}} | f
	|_{_{H^{1/2}}}.
    \end{equation}

\end{lem}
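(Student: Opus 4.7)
The plan is to reduce the desired inequality to the Kato--Herbst inequality cited in the introduction, via symmetric decreasing rearrangement. Writing $|V^{1/2} f|^{2}_{L^{2}} = \int_{\R^3}|V(x)||f(x)|^{2}\,dx$, the Hardy--Littlewood rearrangement inequality together with the identity $(|f|^{2})^{\#} = (|f|^{\#})^{2}$ gives $\int |V||f|^{2}\,dx \leq \int V^{\#}(|f|^{\#})^{2}\,dx$, where $V^{\#}$ and $|f|^{\#}$ denote the symmetric decreasing rearrangements on $\R^{3}$.

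The key pointwise bound is $V^{\#}(x) \leq C\,|V|_{L^{3}_{w}}\,|x|^{-1}$, which follows from the explicit form of $|\cdot|_{L^{3}_{w}}$ in the paper: the decreasing rearrangement $V^{*}$ satisfies $V^{*}(s) \leq C\,|V|_{L^{3}_{w}}\, s^{-1/3}$, and by definition $V^{\#}(x) = V^{*}\bigl((4\pi/3)|x|^{3}\bigr)$. Hence $\int V^{\#}(|f|^{\#})^{2}\,dx \leq C\,|V|_{L^{3}_{w}} \int |x|^{-1}(|f|^{\#})^{2}\,dx$. Applying the Kato--Herbst inequality to the nonnegative $H^{1/2}$-function $|f|^{\#}$ bounds the latter by $(\pi/2)\,(|f|^{\#},\sqrt{-\Delta}\,|f|^{\#})_{L^{2}}$, and the fractional P\'olya--Szeg\H{o} inequality (Almgren--Lieb) allows replacing $|f|^{\#}$ by $f$, yielding $(|f|^{\#},\sqrt{-\Delta}\,|f|^{\#})_{L^{2}} \leq (f,\sqrt{-\Delta} f)_{L^{2}} \leq C'\,|f|^{2}_{H^{1/2}}$. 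Concatenating the four steps gives the claimed estimate with an explicit constant.

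The step I expect to be most delicate is the fractional P\'olya--Szeg\H{o} inequality: while its $\dot H^{1}$ analogue is elementary, the corresponding statement for $\dot H^{1/2}$ is a nontrivial result. An alternative that bypasses rearrangement altogether is the generalized H\"older inequality in Lorentz spaces, $\int |V||f|^{2}\,dx \leq C\,|V|_{L^{3,\infty}}\,\||f|^{2}\|_{L^{3/2,1}} = C\,|V|_{L^{3,\infty}}\,\|f\|^{2}_{L^{3,2}}$, followed by the sharp Sobolev embedding $H^{1/2}(\R^{3}) \hookrightarrow L^{3,2}(\R^{3})$; this route is of comparable depth.
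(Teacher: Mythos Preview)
Both routes you outline are correct. The rearrangement argument closes as written: the pointwise bound $V^{\#}(x)\leq C\,|V|_{L^{3}_{w}}|x|^{-1}$ follows directly from the paper's norm via $V^{*}(s)\leq |V|_{L^{3}_{w}}s^{-1/3}$, and the chain Kato--Herbst, fractional P\'olya--Szeg\H{o}, plus the diamagnetic inequality $[|f|]_{\dot H^{1/2}}\leq[f]_{\dot H^{1/2}}$ (needed for complex $f$) gives the bound. The Lorentz alternative is equally valid. The paper, however, takes a third and more operator-theoretic route: it writes $f=G^{\mu}_{1/2}\ast h$ with $h=(-\Delta+\mu^{2})^{1/4}f\in L^{2}$ and $G^{\mu}_{1/2}\in L^{6/5}_{w}$ the Bessel kernel of order $\tfrac12$, and then applies a single weak Young--type inequality of Kovalenko--Perelmuter--Semenov (stated in the appendix as Proposition~\ref{prop: Kovalenko}),
\[
|V^{1/2}(G^{\mu}_{1/2}\ast h)|_{L^{2}}\leq C\,|V^{1/2}|_{L^{6}_{w}}\,|G^{\mu}_{1/2}|_{L^{6/5}_{w}}\,|h|_{L^{2}}.
\]
Your Lorentz alternative is the closest in spirit: the paper's multiplication--convolution estimate is essentially your Lorentz H\"older combined with the sharp embedding $H^{1/2}\hookrightarrow L^{3,2}$, packaged into one step via the Green's-function representation. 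Your first approach has the appeal of reducing everything to the Kato--Herbst inequality already quoted in the introduction, at the cost of importing rearrangement inequalities external to the paper; the paper's argument stays entirely within the weak-$L^{p}$ calculus collected in Appendix~\ref{sec:appendixA}.
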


\begin{proof}
    Follows from~\cite[(42)]{Lenzmann07} that the Green function
    $G^{\mu}_{\alpha}$ of $(- \Delta + \mu^2 )^{\alpha/2}$ belongs to
    $L_{ w}^{3/(3 - \alpha)}(\R^3)$ if $\mu \geq 0$ and $0 < \alpha <
    3$.
    
    Then, given $f \in H^{1/2} (\R^3)$, let $h= (- \Delta +
    \mu^2)^{1/4} f \in L^2 (\R^3)$, $f = G^{\mu}_{1/2} \ast h$.  From
    the weak Young's inequality (see Proposition \ref{prop: Kovalenko}
    in appendix \ref{sec:appendixA}), we deduce
    \begin{align*}
	|V^{1/2} f |_{_{L^2}} &= |V^{1/2} ( G^{\mu}_{1/2} \ast h)
	|_{_{L^2}} \leq C | V^{1/2} |_{_{L^6_{w}}} | G^{\mu}_{1/2}
	|_{_{L^{6/5} _{w}}} |h |_{_{L^2}}\\
	&\leq C | V |_{_{L^3_{w}}} |(- \Delta + \mu^2)^{1/4} f
	|_{_{L^2}} \leq C | V |^{1/2}_{_{L^3_{w}}} | f |_{_{H^{1/2}}}.
    \end{align*}
\end{proof}

   Now, we introduce   the differential  $  d \mathcal{I} (\phi) : H^1 \to \R$ of the quadratic functional  $\mathcal{I}$ \begin{align*}
   d \mathcal{I} (\phi) [h]  =  & 2 \RE  \iint_{\R^4_{+}} ( \left(\partial_{x}\phi, h \right)_{_{\C^2}} + 
   c^2  \left(\nabla_{y}\phi,   \nabla_{y} h  \right)_{_{\C^2}} +  m^2 c^4 \left(\phi, h \right)_{_{\C^2}} ) \\
 & \qquad \qquad   + 2 \RE  (\varphi,V^{2 \times 2}_{_\text{FW}} h)_{_{L^2}} 
  \end{align*}
and  also we  compute   $d \mathcal{I}(\phi)[\chi^{2}\phi]  $ for any $\chi = \chi(y) \in
C^{\infty}_{0}(\mathbb{R}^{3})$ and $\phi \in
H^{1}$.  We have 
\begin{align*}
  \RE \iint_{\R^4_{+}} \left(\partial_{x}\phi, \partial_{x}(\chi^{2}\phi)  \right)_{_{\C^2}} &=  \iint_{\R^4_{+}}  \abs{\partial_{x}(\chi\phi)}^{2} \\
   \RE \iint_{\R^4_{+}} \left(\partial_{x}\phi, \partial_{x}(\chi^{2}\phi)  \right)_{_{\C^2}} &=  \iint_{\R^4_{+}}  \abs{\partial_{x}(\chi\phi)}^{2} \\
\end{align*}
and we compute, adding and substracting $\abs{\nabla_{y}(\chi\phi)}^{2} $,
\begin{align*}
    \RE  \iint_{\R^4_{+}}  & \left(\nabla_{y}\phi,   \nabla_{y}(\chi^{2}\phi)  \right)_{_{\C^2}}  =  
     \iint_{\R^4_{+}} 
     \abs{\nabla_{y}(\chi\phi)}^{2} \\
     & \quad      -     \iint_{\R^4_{+}} ( \chi^{2}\abs{\nabla_{y}\phi}^{2} +
     \abs{\phi}^{2}\abs{\nabla_{y}\chi}^{2} + 2
     \RE \left(\phi \nabla_{y}\chi ,   \chi  \nabla_{y}\phi  \right)_{_{\C^2}} )  \\
     &  \qquad  \qquad  +  2 \RE  \iint_{\R^4_{+}} (\left(  \nabla_{y}\phi,  \chi    \phi  \nabla_{y} \chi 
    \right)_{_{\C^2}}  + 
    \chi^{2}\abs{\nabla_{y}\phi}^{2} )\\
    & \qquad \qquad \qquad = \iint_{\R^4_{+}}
    ( \abs{\nabla_{y}(\chi\phi)}^{2} - 
    \abs{\phi}^{2}\abs{\nabla_{y}\chi}^{2} ).
 \end{align*}
and, letting $\varphi = \phi_{tr} \in H^{1/2}$,
\begin{equation*}
  \RE  (\varphi,V^{2 \times 2}_{_\text{FW}} \chi^{2}\varphi)_{_{L^2}} 
    = (\chi\varphi,V^{2 \times 2}_{_\text{FW}} \chi\varphi)_{_{L^2}} + \RE
    (\varphi,[V^{2 \times 2}_{_\text{FW}}, \chi] \chi\varphi)_{_{L^2}}
\end{equation*}
where $[ \, \cdot \, , \, \cdot \,]$ denotes the commutator of
    operators. 
Then, we have that
\begin{equation*}
    d \mathcal{I}(\phi)[\chi^{2}\phi] = d
    \mathcal{I}(\chi\phi)[\chi\phi] - 2c^{2}  \iint_{\R^4_{+}} |\nabla_{y}\chi|^2 |\phi|^2
    + 2 \RE
    (\varphi,[V^{2 \times 2}_{_\text{FW}}, \chi] \chi\varphi)_{_{L^2}}.
\end{equation*}

Now, we use the commutator identity
\begin{equation*}
    [ABC, D] = AB[C,D] + A[B,D]C + [A,D]BC
\end{equation*}
and the fact that $[V,\chi] = 0$ to deduce that (we let $Q\varphi = 
\fw \psi$)
\begin{align*}
    (\varphi,[V^{2 \times 2}_{_\text{FW}}, & \chi]  \chi\varphi)_{_{L^2}} =
    (\varphi,Q^{*}[\fw V \fw^{-1}, \chi] Q\chi\varphi)_{_{L^2}}  =   (Q \varphi,[\fw V \fw^{-1}, \chi] \chi Q\varphi)_{_{L^2}}  \\
   &= (Q \varphi, \fw V [\fw^{-1}, \chi] \chi Q\varphi)_{_{L^2}} + (Q\varphi,
    [\fw, \chi] V \fw^{-1} \chi Q\varphi)_{_{L^2}} \\
    &= (\fw \psi, \fw V [\fw^{-1}, \chi] \chi \fw \psi)_{_{L^2}} + (\fw \psi,
    [\fw, \chi] V \fw^{-1} \chi \fw \psi)_{_{L^2}} \\
    &= (V \psi, [\fw^{-1}, \chi] \chi \fw \psi)_{_{L^2}} + ([\chi, \fw^{-1}]
    \fw \psi, V \fw^{-1} \chi \fw \psi)_{_{L^2}}.
\end{align*}
In addition
\begin{equation*}
 d\mathcal{I}(\chi\phi)[\chi\phi] =  2 \mathcal{I}(\chi\phi). 
\end{equation*}
Finally we get
\begin{multline}
  \label{eq:differenziale_commutato}
    d \mathcal{I}(\phi)[\chi^{2}\phi] = 
  2  \mathcal{I}(\chi\phi)  - 2c^{2}   \iint_{\R^4_{+}} |\nabla_{y}\chi|^2 |\phi|^2 
    + 2 \RE (V \psi, [\fw^{-1}, \chi] \chi \fw \psi)_{_{L^2}} \\
  + 2 \RE ([\chi,
    \fw^{-1}] \fw \psi, V \fw^{-1} \chi \fw \psi)_{_{L^2}}\\
\end{multline}

We divide the proof of Theorem \ref{thm:eigenvalues} in several
steps.  Let us begin with the existence of the ground state.
  
We consider the following minimization problem :
\begin{equation}
 \label{eq:p1}
     \tag{$\mathcal{P}_{_{1}}$} \lambda_{1} = \inf_{_{ \phi \in S} }
    \mathcal{I} (\phi).
\end{equation}
where $S = \settc{\phi \in
H^{1}}{\abs{\phi_{tr}}_{L^{2}}^{2} = 1}$.

\begin{lem}
    \label{lem:coercive}
    The following holds:
    \begin{itemize}

	\item[(i)] $\mathcal{I} (\phi)$ is bounded by below and
	coercive on $H^{1}$,
	\item[(ii)] $ 0 < \lambda_{1} < mc^2$.
 
    \end{itemize}
 
\end{lem}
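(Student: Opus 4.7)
The plan for part (i) is to separate the potential and the kinetic parts of $\mathcal{I}$ and control the former via hypothesis (h2) and Lemma \ref{lem:betterminimizer}. Writing $\varphi = \phi_{tr}$, the identity \eqref{eq:VinR2} identifies $(\varphi, V^{2\times 2}_{_\text{FW}}\varphi)_{L^2}$ with $(\Lambda_{+}\fw^{-1}Q\varphi, V\Lambda_{+}\fw^{-1}Q\varphi)_{L^2}$, and applying (h2) with constant $a \in (0,1)$ bounds this in absolute value by $a(\varphi, \sqrt{-c^{2}\Delta + m^{2}c^{4}}\,\varphi)_{L^2}$. Lemma \ref{lem:betterminimizer} then gives $(\varphi,\sqrt{-c^{2}\Delta + m^{2}c^{4}}\,\varphi)_{L^2} \leq \|\phi\|_{H^1}^{2}$, so that $\mathcal{I}(\phi) \geq (1-a)\|\phi\|_{H^1}^{2}$. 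This immediately yields both bounded-below-ness (by $0$) and coercivity in $H^{1}(\R^4_+,\C^2)$.

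For the lower estimate $\lambda_{1} > 0$ in part (ii), I would use that on the constraint set $S$ the trace has unit $L^2$ norm, so the mass bound $(\varphi, \sqrt{-c^{2}\Delta + m^{2}c^{4}}\,\varphi)_{L^2} \geq mc^{2}$ holds. Chaining this with the bounds from (i) gives $\mathcal{I}(\phi) \geq (1-a)(\varphi, \sqrt{-c^{2}\Delta + m^{2}c^{4}}\,\varphi)_{L^2} \geq (1-a)mc^{2}$, hence $\lambda_{1} \geq (1-a)mc^{2} > 0$.

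For the strict upper estimate $\lambda_{1} < mc^{2}$ I would construct a concentration family. Pick a scalar $f \in C^{\infty}_{0}(\R^{3})$ supported in $\{1 < |y| < 2\}$ with $|f|_{L^{2}} = 1$, fix $\chi_{0} \in \C^{2}$ with $|\chi_{0}| = 1$, and set $\varphi_{R}(y) = R^{-3/2} f(y/R)\chi_{0}$, so $|\varphi_{R}|_{L^{2}} = 1$ and $\mathrm{supp}\,\varphi_{R} \subset \{R < |y| < 2R\}$. Let $\bar\phi_{R}$ be the extension of $\varphi_{R}$ provided by Lemma \ref{lem:betterminimizer}, which belongs to $S$ and realizes equality in that lemma; then
\begin{equation*}
    \mathcal{I}(\bar\phi_{R}) = (\varphi_{R},\sqrt{-c^{2}\Delta + m^{2}c^{4}}\,\varphi_{R})_{L^2} + (\varphi_{R}, V^{2\times 2}_{_\text{FW}}\varphi_{R})_{L^2}.
\end{equation*}
A Fourier-space change of variables $q = Rp$ expands the kinetic part as $mc^{2} + R^{-2}(2m)^{-1}\int|\nabla f|^{2}\,dy + o(R^{-2})$. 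For the potential part I would use \eqref{eq:VinR2} to rewrite it as $\int V(y)|\Psi_{R}(y)|^{2}\,dy$ with $\Psi_{R} = \fw^{-1}Q\varphi_{R}$; from the explicit form of $U(p)^{-1}$, at low momentum the upper two components of $\Psi_{R}$ equal $\varphi_{R}$ up to an $L^{2}$ error $O(R^{-2})$, while the lower components behave like $(-i/(2mc))\,\sigma\cdot\nabla\varphi_{R}$ and have $L^{2}$ norm of order $R^{-1}$. Hypothesis (h3)(ii) furnishes $M(R) \to +\infty$ with $V(y) \leq -M(R)/|y|^{2}$ a.e.\ on $\{|y| > R\}$, so the dominant contribution is at most $-M(R)/(4R^{2})$, while the error terms are at worst $O(R^{-2})$ using (h3)(i) and Lemma \ref{lem:estimate_weakspace} to absorb any singular part near the origin. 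Altogether $\mathcal{I}(\bar\phi_{R}) \leq mc^{2} + (C_{f} - M(R)/4)R^{-2} + o(R^{-2}) < mc^{2}$ for $R$ sufficiently large.

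The main obstacle is precisely this last analysis: the FW transformation is non-local, so although $\varphi_{R}$ is compactly supported in a shell far from the origin, $\Psi_{R}$ is not, and one must control the contribution of the "spread-out tail" and of the lower-component piece to $\int V|\Psi_{R}|^{2}\,dy$. The key technical point to check carefully is that the lower-component contribution (of order $R^{-2}$ in $L^{2}$-norm squared) is genuinely dominated by the $M(R)/R^{2}$ gain from the upper component, for which one uses that $U(p)$ is a smooth unitary with $U(0)=I$, together with the integrability of $V$ near infinity guaranteed by (h3)(i).
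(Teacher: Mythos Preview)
Your proof of (i) and of the lower bound $\lambda_{1}>0$ in (ii) is the same as the paper's.  For the strict upper bound $\lambda_{1}<mc^{2}$ your argument is correct and equivalent in spirit, but organized differently.  The paper does not use the harmonic extension of Lemma~\ref{lem:betterminimizer}; it takes instead the explicit test function $\phi_{\eta}(x,y)=e^{-mc^{2}x}\varphi_{\eta}(y)$ with $\varphi_{\eta}(y)=\eta^{3/2}\varphi(\eta y)$ and $\eta\to 0^{+}$ (corresponding to your $R\to\infty$).  This non-minimal extension has the virtue that the kinetic part computes \emph{exactly} to $mc^{2}+\tfrac{\eta^{2}}{2m}\int\abs{\nabla\varphi}^{2}$, with no expansion error.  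For the potential term the paper rescales to get $(\varphi_{\eta},\Vfw\varphi_{\eta})_{L^{2}}=(\varphi,V(\eta^{-1}y)\varphi)_{L^{2}}+(Q\varphi,[U_{\text{FW},\eta},V(\eta^{-1}y)]U_{\text{FW},\eta}^{-1}Q\varphi)_{L^{2}}$ and bounds the commutator term by $O(\eta^{2})$ directly from the same estimates $|a_{+}(\eta p)-1|\leq C\eta^{2}|p|^{2}$, $|a_{-}(\eta p)|\leq C\eta|p|$ that you use for your component decomposition.  The commutator packaging is a bit cleaner because it isolates the whole FW error in one term rather than splitting into upper/lower pieces and cross terms, and it does not require the test function to be supported in an annulus: the paper uses (h1) via \eqref{eq:rescale_weak} to control the contribution near the origin.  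Your annular support, on the other hand, makes the main-term estimate $\int V|\varphi_{R}|^{2}\leq -M(R)/(4R^{2})$ immediate.  Either route leads to $\mathcal{I}(\phi)-mc^{2}\leq (C-M(R)/4)R^{-2}+o(R^{-2})<0$ for $R$ large.
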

 
\begin{proof}
    (i) Let $\phi \in H^{1}$, $\varphi =
    \phi_{tr}$ and define $ \psi= \fw^{-1} Q \varphi$, then
    $\Lambda_{+} \psi = \psi$, hence by (\textbf{h2}),
    \eqref{eq:D0inR2}, \eqref{eq:VinR2} and lemma
    \ref{lem:betterminimizer}, there exists $a \in (0,1)$ such that
    \begin{equation*}
	\begin{split}
	    ( \varphi, V^{^{2 \times 2}}_{_\text{FW}}
	    \varphi )_{L^2} &=
	    (\Lambda_{+} \psi, V \Lambda_{+} \psi )_{_{L^2}} \geq - a (\Lambda_{+} \psi, D_{0}
	    \Lambda_{+} \psi )_{_{L^2}} \\
	    &= - a ( \varphi, \sqrt{-c^{2} \Delta +
	    m^{2}c^{4}}  \I_{2}\, \varphi )_{_{L^2}} \\
	    & \geq {- a} \iint_{\R^4_{+}} (|\partial_{x} \phi
	    |^2+ c^2|\nabla_{y} \phi
	    |^2 + m^2 c^4 |\phi |^2 ) \,
	    dx\,dy
	\end{split}
    \end{equation*}
    Therefore, we may conclude that there exists $\delta >0$ such that
    $\mathcal{I} (\phi) \geq \delta \| \phi
    \|^2_{_{H^{1}}} $.

    (ii) From (i) immediately follows that $ \lambda_1 > 0$.  Now take
    $\phi (x,y) = \text{e}^{- mc^2 x} \varphi(y)$, with $\varphi \in
    C^{\infty}_{0}(\R^{3}, \mathbb{C}^{2} )$, and
    $ |\varphi |_{_{L^2}}=1$, we have
    \begin{equation*}
	\mathcal{I} (\phi) - mc^2 = \frac{1}{2 m} \int_{\R^3}
	|\nabla \varphi|^2 + \int_{\R^3} ( \varphi, V^{^{2
	\times 2}}_{_\text{FW}} \varphi)_{_{\C^2}} =
	\mathcal{E}(\varphi)
    \end{equation*}

    Take, now $\varphi_{_{\eta}} (y) = \eta^{^{3/2}} \varphi(\eta y)$, we
    have $|\varphi_{_{\eta}} |_{_{L^2}}=1$, for any $\eta >0$ and
    setting $\phi_{_{\eta}}(x,y) = \text{e}^{- mc^2 x}\varphi_{_{\eta}}
    (y) $
    \begin{align*}
	\lambda_1 - mc^2 \leq & \inf_{_{\eta >0} } \mathcal{I}
	(\phi_{_{\eta}}) - mc^2 = \inf_{_{\eta >0}
	}\mathcal{E}(\varphi_{_{\eta}} ) = \\
	= & \inf_{_{\eta >0} } \left({\eta ^2} \frac{1}{2 m}
	\int_{\R^3} |\nabla \varphi |^2+ \int_{\R^3} \left(
	\varphi_{_{\eta}}, V^{^{2 \times 2}}_{_\text{FW}}
	\varphi_{_{\eta}} \right)_{_{\C^2}} \right)
    \end{align*}
    We claim that by (h3), 
    \begin{equation*} 
	\limsup _{\eta \to 0^{^{+}}} \frac{1}{{\eta^2}} \left(
	\varphi_{_{\eta}}, V^{^{2 \times 2}}_{_\text{FW}}
	\varphi_{_{\eta}} \right)_{L^2} =
	-\infty,
    \end{equation*}
    which implies that $\lambda_1 - mc^2 <0$.
    
      Indeed, denoting $ \hat{f} (p)= \mathcal{F}(f)(p)$ and
    $U_{_{\text{FW},\eta}} = \mathcal{F}^{-1}U(\eta p)\mathcal{F}$ we have
     \begin{align*}
	& ( \varphi_{_{\eta}}, V^{^{2 \times 2}}_{_\text{FW}}
	\varphi_{_{\eta}} )_{_{L^2}} = (Q\varphi_{_{\eta}}, \fw V(y)
	\fw^{-1} Q \varphi_{_{\eta}} )_{_{L^2}}\\
	&  = (\mathcal{F}Q\varphi_{_{\eta}}, \mathcal{F} \fw V(y) \fw^{-1} Q
	\varphi_{_{\eta}} )_{_{L^2}} = (Q \hat{\varphi}_{_{\eta}},
	U(p)\mathcal{F}V(y)\mathcal{F}^{-1}U(p)^{-1} Q \hat{\varphi}_{_{\eta}})_{_{L^2}} \\
	&= (Q\hat{\varphi}, \eta^{3} U(\eta p) \hat{V}(\eta p) * U(\eta
	p)^{-1} Q \hat{\varphi} )_{_{L^2}} = (Q\varphi, U_{_{\text{FW},\eta}} V(\eta^{-1} y)
	U_{_{\text{FW},\eta}}^{-1} Q \varphi)_{_{L^2}}\\
		&= (\varphi, V(\eta^{-1} y) \varphi)_{_{L^2}} + (Q\varphi,
	[U_{_{\text{FW},\eta}}, V(\eta^{-1} y)] U_{_{\text{FW},\eta}}^{-1} Q
	\varphi)_{_{L^2}} .\\
	 \end{align*}
	  Hence, since by  (\textbf{h1}) and  lemma  \ref{lem:estimate_weakspace},  for any $f
    \in H^{1/2}$ we have
      \begin{equation}
	\label{eq:rescale_weak}
	|V^{1/2} (\eta^{-1} y) f |_{_{L^2}} \leq C \eta^{1/2} | V
	|^{1/2}_{_{L^3_{w}}} | f |_{_{H^{1/2}}}.
    \end{equation}
      We get 
    \begin{align*}  
  | (Q\varphi,
	[U_{_{\text{FW},\eta}} , & V(\eta^{-1} y)] 
	U_{_{\text{FW},\eta}}^{-1} Q \varphi)_{_{L^2}} |   \\
	& = \abs{(Q\varphi\,,\,[U_{_\text{FW},\eta} - \I_{4}, V(\eta^{-1}
	y)] U_{_\text{FW},\eta}^{-1} Q \varphi)_{_{L^2}}} \\
	&\leq \abs{V^{1/2}(\eta^{-1} y) (U^{-1}_{_\text{FW},\eta} -
	\I_{4}) Q \varphi}_{_{L^2}} \abs{V^{1/2}(\eta^{-1} y)
	U^{-1}_{_\text{FW},\eta} Q \varphi}_{_{L^2}} \\
	&\quad + \abs{ V^{1/2}(\eta^{-1} y) Q \varphi}_{_{L^2}} \abs{
	V^{1/2}(\eta^{-1} y) (U_{_\text{FW},\eta} - \I_{4})
	U^{-1}_{_\text{FW},\eta} Q \varphi }_{_{L^2}}\\
	& \leq C \eta | V |_{_{L^3_{w}}} \abs{ (U^{-1}_{_\text{FW},\eta}
	- \I_{4}) Q \varphi}_{_{H^{1/2}}} \abs{\varphi}_{_{H^{1/2}}} .
    \end{align*}

    Then recalling that
    \begin{equation*}  
	U^{-1}(\eta p) = a_{+} (\eta p) \I_{4} - a_{-} (\eta p)
	\mathbf{\beta} \frac{\underline{\mathbf{\alpha}} \cdot p}{|p|}
    \end{equation*}
    with $a_{\pm}(\eta p) = \sqrt{\frac{1}{2}( 1 \pm mc^2 / \lambda(\eta
    p))} $, and $\lambda(\eta p)= \sqrt{ \eta^2 c^2 |p|^2 + m^2 c^4 } $,
    we have
    \begin{equation*}  
	U^{-1}(\eta p) - \I_{4} = ( a_{+} (\eta p) - 1)\I_{4} - a_{-}
	(\eta p) \mathbf{\beta} \frac{\underline{\mathbf{\alpha}} \cdot
	p}{|p|}
    \end{equation*}
    and we estimate 
    \begin{equation*}
	|a_{+} (\eta p) - 1| \leq \frac{|mc^2 -  \lambda(\eta p)|}{2 \lambda(\eta p)} \leq 
	\frac{\eta^2 |p|^2}{2 m^2 c^2} 
	  \end{equation*}
	and
	   \begin{equation*}
 | a_{-} (\eta p) | \leq   \left(\frac{\lambda(\eta p) - mc^2 }{2 \lambda(\eta p)}   \right)^{1/2}   
  \leq \frac{\eta |p|}{\sqrt{2} m c} .
      \end{equation*}
        
    Therefore we may conclude 
    \begin{equation*}
	\sup_{p \in \R^3 } \frac{ |a_{+} (\eta p) - 1| }{(1 + |p|)^2}
	\leq C \eta ^2 \qquad \text{and} \quad \sup_{p \in \R^3 }
	\frac{ |a_{-} (\eta p) | }{1 + |p|} \leq C \eta
    \end{equation*}
for some constant $C>0$.

    Moreover, since $ \hat{\varphi}$ is the Fourier transform of a
    compact support $C^{\infty}$-function, it
    decays at infinity faster than any power, namely for any $\alpha
    >0$ there exists a positive constant $C_{\alpha} >0$ such that
    \begin{equation}
	\label{eq:decay1}
	| \hat{\varphi}(p)| \leq \frac{C_{\alpha} }{(1 +
	|p|)^{\alpha}}.
    \end{equation}

   Then,  we have 
    \begin{align*}  
	\abs{(U^{-1}_{_\text{FW},\eta} & - \I_{4}) Q
	\varphi}^2_{_{H^{1/2}}} = \int_{\R^3} (1 + |p|)
	\abs{\mathcal{F} ((U^{-1}_{_\text{FW},\eta} - \I_{4}) Q
	\varphi}^2 \, dp \\
	& \leq 2 \int_{\R^3} (1 + |p|) |( a_{+} (\eta p) - 1)
	\hat{\varphi} |^2 + 2 \int_{\R^3} (1 + |p|) \left|a_{-} (\eta p)
	\mathbf{\beta} \frac{\underline{\mathbf{\alpha}} \cdot p}{|p|}
	Q \hat{\varphi}
	\right|^2 \, dp \\
	& \leq 2 \int_{\R^3} (1 + |p|) | a_{+} (\eta p) - 1|^2
	|\hat{\varphi} |^2 \, dp + 2 \int_{\R^3} (1 + |p|) |a_{-} (\eta p) |^2 |
	\hat{\varphi }|^2 \, dp \\
	& \leq 2 \sup_{p \in \R^3 } \frac{ |a_{+} (\eta p) - 1|^2 }{(1
	+ |p|)^4} \int_{\R^3} (1 + |p|)^5 |\hat{\varphi} |^2 \, dp  \\
	&\qquad + 2 \sup_{p \in \R^3 } \frac{ |a_{-} (\eta p) |^2 }{(1
	+ |p|)^2} \int_{\R^3} (1 + |p|) ^3 | \hat{\varphi }|^2 \, dp  \leq  C \eta^2 .
    \end{align*} 
    for some constant $C >0$  depending only on $\varphi$ and $\eta$ sufficiently small. 
    We get 
    \begin{equation*}
	( \varphi_{_{\eta}}, V^{^{2 \times 2}}_{_\text{FW}}
	\varphi_{_{\eta}} )_{_{L^2}} = ( \varphi, V(\eta^{-1} y) \varphi
	)_{_{L^2}} + O(\eta^2).
    \end{equation*}
    By   (\textbf{h3})  for any $K >0$ there exists $R >0$ such that for any $|y|
    >R$ we have $ V(y) \leq - K / |y|^2 $ a.e..  We have
    \begin{align*}
	( \varphi, V(\eta^{-1} y) \varphi )_{_{L^2}} &=
	\int_{\{\eta^{-1} |y | \leq R \}} V(\eta^{-1} y) | \varphi
	|^2 + \int_{\{ \eta^{-1} |y | > R \}} V(\eta^{-1} y) |
	\varphi |^2\\
	&\leq \eta^3 \sup_{|y| \leq \eta R} | \varphi(y) |^2
	\int_{\{ |y| \leq R \}} |V(y) | - K \eta^2 \int_{\{ |y | > \eta
	R \}} \frac{ 1}{|y|^2} | \varphi |^2\\
	&\leq C( \eta^3 - K \eta^2 )
    \end{align*}
    where the constant $C >0 $ depends on $\varphi$ and $R$, and $K >
    0$ is arbitrarily large.
           
    As claimed above we may then conclude that given $\varphi \in
    C^{\infty}_{0}(\R^{3}; \C^2)$
    \begin{equation*}
	\limsup _{\eta \to 0^{^{+}}} \frac{1}{\eta^2} (\varphi_{_{\eta}},
	V^{^{2 \times 2}}_{_\text{FW}} \varphi_{_{\eta}})_{_{L^2}} =
	\limsup _{\eta \to 0^{^{+}}} \frac{1}{\eta^2} ( \varphi,
	V(\eta^{-1} y) \varphi)_{_{L^2}} + \frac{O(\eta^2)}{\eta^2}= -
	\infty
    \end{equation*}
\end{proof}

We will minimize $\mathcal{I}$ on the set 
\begin{equation*}
    S = \settc{\phi \in H^{1}}{\mathcal{G}(\phi) =
    \abs{\phi_{tr}}_{_{L^{2}}}^{2} = 1}.
\end{equation*}
We recall that the tangent space at $S$ at the point $\phi \in S$ is
the set
\begin{equation*}
    T_{\phi}S = \settc{h \in H^{1}}{d\mathcal{G}(\phi)[h] =
    2 \RE(\phi_{tr},h_{tr})_{_{L^{2}} }= 0}
\end{equation*} 
and that $\nabla_{S}\mathcal{I}(\phi)$, the projection of the gradient
on the tangent space $T_{\phi}S$ to $S$ at the point $\phi$ is given
by
\begin{equation*}
    \nabla_{S} \mathcal{I}(\phi) = \nabla \mathcal{I}(\phi) - \mu(\phi) 
    \nabla \mathcal{G}(\phi)
\end{equation*}
where $\nabla \mathcal{I}(\phi) \in H^{1}$ is such that
\begin{equation*}
    (\nabla \mathcal{I}(\phi), h)_{_{H^{1}}} = d\mathcal{I}(\phi)[h] = 2
    \RE (\phi,h)_{_{H^{1}}} + 2 \RE (\phi_{tr},\Vfw h_{tr})_{_{L^{2}}}
    \qquad \text{for all } h \in H^{1},
\end{equation*}
$\nabla \mathcal{G}(\phi) \in H^{1}$ is such that
\begin{equation*}
    (\nabla \mathcal{G}(\phi),h)_{_{H^{1}}} = 2 \RE
    (\phi_{tr},h_{tr})_{_{L^{2}}} \qquad \text{for all } h \in H^{1},
\end{equation*}
and $\mu(\phi)  \in \mathbb{R}$ is such that $\nabla_{S}\mathcal{I}(\phi) \in
T_{\phi}S$.  Then
\begin{equation*}
    0 = (\nabla \mathcal{G}(\phi), \nabla_{S}
    \mathcal{I}(\phi))_{_{H^{1}}} = (\nabla \mathcal{G}(\phi), \nabla
    \mathcal{I}(\phi))_{_{H^{1}}} - \mu (\phi) \norm{\nabla
    \mathcal{G}(\phi)}^{2}_{_{H^{1}}} 
\end{equation*}
and
\begin{equation*}
    \mu (\phi) = \frac{(\nabla \mathcal{G}(\phi), \nabla_{S}
    \mathcal{I}(\phi))_{_{H^{1}}}}{\norm{\nabla
    \mathcal{G}(\phi)}^{2}_{_{H^{1}}} }
\end{equation*}

From
\begin{align*}
    (\nabla_{S} \mathcal{I}(\phi),\phi)_{_{H^{1}}} &= (\nabla
    \mathcal{I}(\phi),\phi)_{_{H^{1}}} - \mu(\phi)  (\nabla
    \mathcal{G}(\phi),\phi)_{_{H^{1}}} \\
    &= 2\mathcal{I}(\phi) - 2\mu (\phi) \mathcal{G}(\phi) = 2 
    \mathcal{I}(\phi) - 2\mu(\phi) 
\end{align*}
we also deduce that
\begin{equation}
    \label{eq:stime_moltiplicatore}
    \mu(\phi)  = \mathcal{I}(\phi) -  \frac{1}{2} (\nabla_{S}
    \mathcal{I}(\phi),\phi)_{_{H^{1}}}
\end{equation}

We now recall the following well known result
\begin{lem}
    There exists a Palais-Smale minimizing sequence $\phi_{n}$ for
    $\mathcal{I}$ on the set $S =
    \settc{\phi}{\abs{\phi_{tr}}_{L^{2}}^{2} = 1}$, that is a
    sequence such that, denoting $\varphi_{n} = (\phi_{n})_{tr}$,
    \begin{equation*}
	\mathcal{I}(\phi_{n}) \to \lambda_{1}, \qquad
	\nabla_{S}\mathcal{I}(\phi_{n}) \to 0, \qquad
	\abs{\varphi_{n}}_{L^{2}}^{2} = 1
    \end{equation*}
\end{lem}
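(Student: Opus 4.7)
The plan is to obtain the Palais--Smale minimizing sequence by applying Ekeland's variational principle to $\mathcal{I}$ restricted to the constraint set $S$. First I would verify that $S$ is a complete $C^{1}$ Hilbert submanifold of $H^{1}$. Since the trace operator $H^{1}(\mathbb{R}^{4}_{+}) \to H^{1/2}(\mathbb{R}^{3}) \hookrightarrow L^{2}(\mathbb{R}^{3})$ is bounded, the functional $\mathcal{G}(\phi) = \abs{\phi_{tr}}_{L^{2}}^{2}$ is a continuous quadratic form, hence of class $C^{1}$ on $H^{1}$, with the differential computed just before the lemma. For $\phi \in S$ one has $d\mathcal{G}(\phi)[\phi] = 2 \abs{\phi_{tr}}_{L^{2}}^{2} = 2 \neq 0$, so $d\mathcal{G}(\phi) \neq 0$ on $S$; thus $S = \mathcal{G}^{-1}(\{1\})$ is a $C^{1}$ Hilbert submanifold of $H^{1}$. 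Being the preimage of a closed set under a continuous map, $S$ is closed in $H^{1}$, and therefore complete in the metric induced by the $H^{1}$-norm.

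Next I would record the regularity and boundedness properties of $\mathcal{I}$ needed by Ekeland. Since $\mathcal{I}$ is the sum of a continuous quadratic form in $\phi$ and of a continuous quadratic form in $\phi_{tr}$ (with $\Vfw$ bounded from $H^{1/2}$ to $H^{-1/2}$ by Lemma~\ref{lem:estimate_weakspace} together with (\textbf{h1})), it is of class $C^{1}$ on $H^{1}$, and its restriction to $S$ inherits a well-defined projected gradient $\nabla_{S}\mathcal{I}(\phi)$ given by the formula displayed above the lemma. By Lemma~\ref{lem:coercive}, $\mathcal{I}$ is bounded below on $H^{1}$ by $0$, and $\lambda_{1} = \inf_{S} \mathcal{I}$ is a finite positive number.

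Then I would apply Ekeland's $\varepsilon$-variational principle on the complete metric space $(S, \norm{\cdot}_{H^{1}})$. Starting from an arbitrary minimizing sequence $\tilde{\phi}_{n} \in S$ with $\mathcal{I}(\tilde{\phi}_{n}) \to \lambda_{1}$ and choosing $\varepsilon_{n} \to 0^{+}$, Ekeland's principle produces $\phi_{n} \in S$ such that $\mathcal{I}(\phi_{n}) \leq \mathcal{I}(\tilde{\phi}_{n})$, $\norm{\phi_{n} - \tilde{\phi}_{n}}_{H^{1}} \to 0$, and $\phi_{n}$ minimizes on $S$ the perturbed functional $\psi \mapsto \mathcal{I}(\psi) + \varepsilon_{n} \norm{\psi - \phi_{n}}_{H^{1}}$. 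Writing the first-order necessary condition for this constrained minimization on the Hilbert submanifold $S$ yields $\norm{\nabla_{S}\mathcal{I}(\phi_{n})}_{H^{1}} \leq \varepsilon_{n} \to 0$, while $\abs{\varphi_{n}}_{L^{2}}^{2} = 1$ with $\varphi_{n} = (\phi_{n})_{tr}$ is automatic from $\phi_{n} \in S$.

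There is essentially no obstacle in this step, which is the reason the result is labelled as well known: once the analytic content of Lemma~\ref{lem:coercive} and the submanifold structure of $S$ are in place, it is a textbook application of Ekeland's principle on a $C^{1}$ constraint manifold. The genuine work, namely extracting a strongly convergent subsequence from $\phi_{n}$ and identifying its limit as an eigenfunction satisfying $(\mathcal{E}_{1})$, is the subject of the subsequent steps of the proof of Theorem~\ref{thm:eigenvalues}.
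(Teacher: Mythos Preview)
Your proposal is correct. The paper's own proof sketches the classical deformation argument (assuming a uniform lower bound on $\norm{\nabla_{S}\mathcal{I}}$ near level $\lambda_{1}$ and deriving a contradiction via the negative gradient flow on $S$), but it explicitly remarks that the lemma can equally be obtained from Ekeland's variational principle---which is exactly the route you take. Both arguments rely on the same analytic input (boundedness below and $C^{1}$ regularity of $\mathcal{I}$, the $C^{1}$ submanifold structure of $S$), so there is no substantive difference in content; your version simply spells out the Ekeland alternative that the paper only names.
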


\begin{proof}
    Assuming that the result does not hold, one deduces that there
    exist $\epsilon > 0$, $\delta > 0$ such that
    $\norm{\nabla_{S}\mathcal{I}(\phi)} \geq \delta > 0$ for all $\phi
    \in S$ such that $\lambda_{1}-\epsilon < \mathcal{I}(\phi) <
    \lambda_{1} + \epsilon$.  Building a gradient flow $\eta' =
    \nabla_{S}\mathcal{I}(\eta)$, which leaves $S$ invariant and
    pushes $\{\mathcal{I} < \lambda_{1} + \epsilon\} \cap S$ into
    $\{\mathcal{I} < \lambda_{1} - \epsilon\} \cap S$, one easily
    reaches a contradiction.
    
    One can  prove the lemma also using Ekeland's variational
    principle.
\end{proof}

\begin{lem}
    \label{lem:potential_term}
    Let $\phi_n $ be a Palais Smale sequence at some level $\lambda
    \geq 0$ for $\mathcal{I}$ on $S$.  Let $\varphi_{n} = (\phi_{n})_{tr}$.
            
    If $\varphi_{n} \rightharpoonup 0$ in $H^{1/2}$ then
    \begin{equation*}
	( \varphi_n, V^{^{2 \times 2}}_{_\text{FW}} \varphi_n
	)_{_{L^2}} \to 0.
    \end{equation*} 
\end{lem}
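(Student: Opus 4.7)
The plan is to exploit the Palais--Smale structure to upgrade the weak $H^{1/2}$-convergence of $\varphi_n$ into a strong convergence on a neighborhood of the singular set of $V$, and then to conclude via the bound of Lemma~\ref{lem:estimate_weakspace} together with the smoothing property of commutators involving the Foldy--Wouthuysen transformation.

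\emph{Setup and splitting of $V$.} Coercivity (Lemma~\ref{lem:coercive}(i)) gives $\phi_n$ bounded in $H^1$, so $\varphi_n$ is bounded in $H^{1/2}$, and \eqref{eq:stime_moltiplicatore} yields $\mu(\phi_n) \to \lambda$. Let $\psi_n := \fw^{-1}Q\varphi_n$, so that the target quantity equals $(\psi_n, V\psi_n)_{L^2}$; then $\psi_n$ is bounded in $H^{1/2}$, $\psi_n \rightharpoonup 0$ in $H^{1/2}$, and by Rellich $\psi_n \to 0$ in $L^2_{loc}(\R^3)$. Using (h1) and (h3), I would decompose $V = V_1 + V_2$ with $V_1 \in L^3_w$ supported in $\overline{B}_{R_0}$ and $V_2 \in L^\infty$ satisfying $\|V_2\|_{L^\infty(|y|>R)} \to 0$. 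The $V_2$-contribution is handled by a standard two-scale split (Rellich on $B_{2R}$ for the near part; uniform decay of $V_2$ for the far part), so it remains to show $\int V_1|\psi_n|^2 \to 0$.

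\emph{Key step: strong localized convergence.} Fix $\chi \in C^\infty_c(\R^3)$ with $\chi \equiv 1$ on $\overline{B}_{R_0}$. Apply identity \eqref{eq:differenziale_commutato} to $\phi_n$ with this $\chi$, and combine it with the Palais--Smale relation $d\mathcal{I}(\phi_n)[\chi^2\phi_n] = 2\mu(\phi_n)|\chi\varphi_n|^2_{L^2} + o(1)$ (which follows from $\nabla \mathcal{I}(\phi_n) = \mu(\phi_n)\nabla\mathcal{G}(\phi_n) + o(1)$ in $H^1$) to get
\[
\mathcal{I}(\chi\phi_n) = \mu(\phi_n)|\chi\varphi_n|^2_{L^2} + c^2\iint_{\R^4_+}|\nabla_y\chi|^2|\phi_n|^2 - \Re\{\text{commutator terms of \eqref{eq:differenziale_commutato}}\} + o(1).
\]
The first two terms vanish by Rellich on the supports of $\chi$ and $\nabla_y\chi$. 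Each commutator term vanishes using that $\chi Q\varphi_n \to 0$ strongly in $L^p(\R^3)$ for every $p \in [2,3)$ (interpolating the Rellich $L^2_{loc}$-convergence with $L^3$-boundedness from Sobolev $H^{1/2} \hookrightarrow L^3$), coupled with the fact that $[\fw^{-1},\chi]$ is a pseudodifferential operator of order $-1$ (its symbol being the Poisson bracket of $\chi$ with the smooth symbol $U^{-1}(p) \in S^0$) and $\fw^{-1}$ is $L^p$-bounded for $1 < p < \infty$; Hölder estimates, combined with Lemma~\ref{lem:estimate_weakspace}, then control the bilinear pairings against $V \in L^3_w + L^\infty$. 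Hence $\mathcal{I}(\chi\phi_n) \to 0$; by coercivity $\|\chi\phi_n\|_{H^1} \to 0$, and the trace inequality gives $\chi\varphi_n \to 0$ strongly in $H^{1/2}$.

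\emph{Conclusion.} Since $\chi \equiv 1$ on $\mathrm{supp}(V_1)$, $\int V_1|\psi_n|^2 = \int V_1|\chi\psi_n|^2$, with the decomposition $\chi\psi_n = \fw^{-1}Q(\chi\varphi_n) + [\chi,\fw^{-1}]Q\varphi_n$ (using $\chi Q = Q\chi$). For the first summand, Lemma~\ref{lem:estimate_weakspace} and the $H^{1/2}$-boundedness of $\fw^{-1}Q$ give $\int V_1|\fw^{-1}Q\chi\varphi_n|^2 \le C|V_1|_{L^3_w}|\chi\varphi_n|^2_{H^{1/2}} \to 0$. For the second, $g_n := [\chi,\fw^{-1}]Q\varphi_n$ is bounded in $H^{3/2}(\R^3)$ with $g_n \rightharpoonup 0$; Rellich yields $g_n \to 0$ in $L^q(B_{R_0})$ for every finite $q$, and Hölder against $V_1 \in L^p(B_{R_0})$ (valid for every $p < 3$ by weak-$L^3$) gives $\int_{B_{R_0}} V_1|g_n|^2 \to 0$. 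Thus $(\psi_n, V\psi_n)_{L^2} \to 0$. The main technical obstacle is establishing the strong localized convergence $\chi\varphi_n \to 0$ in $H^{1/2}$: weak convergence alone is insufficient, since multiplication by $\chi$ is not compact on $H^{1/2}$, so the Palais--Smale condition is genuinely used via the commutator identity \eqref{eq:differenziale_commutato}.
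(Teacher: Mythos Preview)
Your overall strategy is close to the paper's: both arguments hinge on the identity \eqref{eq:differenziale_commutato} applied to a cutoff and on coercivity to deduce smallness of $\mathcal{I}(\chi\phi_n)$. However, there is a genuine gap in your ``key step''. You claim that the bulk term $c^{2}\iint_{\R^4_{+}}|\nabla_{y}\chi|^{2}\,|\phi_n|^{2}\,dx\,dy$ vanishes as $n\to\infty$ ``by Rellich on the support of $\nabla_{y}\chi$''. This is not justified: the support of $\nabla_{y}\chi$, viewed as a subset of $\R^4_{+}$, is the infinite slab $(0,\infty)\times\operatorname{supp}(\nabla\chi)$, so the Rellich theorem does not apply. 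Worse, the hypothesis $\varphi_n\rightharpoonup 0$ in $H^{1/2}(\R^3)$ says nothing about weak convergence of $\phi_n$ in the interior of $\R^4_+$; the weak $H^1$-limit of $\phi_n$ could be any function with zero trace. So there is no reason, a priori, for this bulk integral to tend to zero for a \emph{fixed} cutoff $\chi$.

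The paper circumvents this precisely by working with the \emph{scaled} family $\chi_{R}(y)=\chi(y/R)$ rather than a fixed cutoff: then $|\nabla_y\chi_R|\le C/R$ and the bulk term is bounded by $CR^{-2}\|\phi_n\|^2_{L^2(\R^4_+)}\le C/R^{2}$, which is made small by sending $R\to\infty$ (uniformly in $n$). The commutator terms are likewise controlled by the quantitative estimate $\|[\chi_R,\fw^{-1}]\fw\|_{H^{1/2}\to H^{1/2}}=O(1/R)$ proved in Appendix~\ref{sec:appB}, rather than via PDO calculus. Your approach can in fact be repaired: the Palais--Smale information, tested against directions $h$ with $h_{\mathrm{tr}}=0$, forces $\phi_n$ to be $o(1)$-close in $H^1$ to the harmonic extension $v_n$ of $\varphi_n$, and for $v_n$ one can show $\int_{K}|v_n(x,\cdot)|^2\,dy\to 0$ for each $x>0$ with an integrable $e^{-2mc^2x}$ majorant; but this is a nontrivial additional argument that your sketch does not supply.
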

  
\begin{proof}
   
    Since $ \mathcal{I}$ is coercive, $ \phi_n $ is bounded
    $H^{1}$, $ \varphi_n $ is bounded in
    $H^{1/2}$ and, by Sobolev embedding,
    relatively compact in $L^p_{\text{loc}}$ for $p \in [2, 3)$.  From
    \eqref{eq:stime_moltiplicatore} follows that also $\mu_{n}$ is
    bounded.
    
    By {\bf(h3)} $V \in L^{\infty}(\R^3 \setminus \overline{B}_{R_0})$ and
    for any $\varepsilon >0$, the set $A_{\varepsilon} = \settc{ y \in \R^3
    \setminus \overline{B}_{R_0}}{|V(y) | \geq \varepsilon} $ is bounded.

    Take a  radial function  $\chi \in C_0^{\infty}(\R^3)$, with values in 
    $[0,1]$ such that $\chi(y) = 1$ for  $y \in B_{1}$ and $\chi(y)= 0$ for 
    $y \in \R^{3} \setminus B_{ 2}$ and 
    let $\chi_{_{R}}(y) = \chi (R^{-1} y)$.

    \subsection*{Step 1: }    $\abs{(\varphi_n, V^{^{2 \times 2}}_{_\text{FW}}
    \varphi_n )_{_{L^2}} - (\chi_{_{R}} \varphi_n,
    V^{^{2 \times 2}}_{_\text{FW}} \chi_{_{R}} \varphi_n )_{_{L^2}}} \to 0$ as $R \to +\infty$.

    Let $\psi_n =\fw^{-1}Q\varphi_n$, we have that $\psi_n
    \rightharpoonup 0$ in $H^{1/2}$ and also 
    that
    \begin{equation*}
	\label{eq:decomposizione_potenziale}
	(\varphi_n, V^{^{2 \times 2}}_{_\text{FW}} \varphi_n
	)_{L^2} = ( \psi_n, V \psi_n
	)_{_{L^2}}
	= ( \psi_n, (1-\chi_{_{R}}^{2})V \psi_n )_{_{L^2}}+ ( \psi_n, \chi_{_{R}}^{2}V \psi_n
	)_{_{L^2}}.
    \end{equation*} 
    
    Taking $R > R_0$ in such a way that $A_{\varepsilon} \subset
    B_{R}$ we have
    \begin{equation*}
	\abs{( \psi_n, (1-\chi_{_{R}}^{2})V \psi_n )_{_{L^2}}} \leq \epsilon \abs{\psi_n}^{2}_{_{L^2}} \leq C \varepsilon. 
    \end{equation*}

    On the other hand, (note that {$\Lambda_{-} ( \chi_{_{R}} \psi_n )
    \not = 0 $})
    \begin{align*}
	( \psi_n, \chi^2_{_{R}}  & V \psi_n )_{_{L^2}}  = (\chi_{_{R}}  \fw^{-1}Q\varphi_n, V
	\chi_{_{R}}  \fw^{-1}Q\varphi_n )_{_{L^2}} \\
	&= ( \chi_{_{R}} \varphi_n, Q^{*} \fw V \chi_{_{R}}\fw^{-1}Q\varphi_n
	)_{_{L^2}}  \\
	& \qquad + ( [\chi_{_{R}}, \fw^{-1}] Q \varphi_n, V
	\chi_{_{R}}\fw^{-1}Q\varphi_n )_{_{L^2}}\\
	&= ( \chi_{_{R}}\varphi_n, V^{2 \times 2}_{_{\text{FW}}} \chi_{_{R}}
	\varphi_n )_{_{L^2}} + ( \chi_{_{R}}\varphi_n, Q^{*} \fw V
	[\chi_{_{R}}, \fw^{-1}]Q \varphi_n )_{_{L^2}} \\
	&\qquad + ( [\chi_{_{R}}, \fw^{-1}] \fw \psi_n, V
	\chi_{_{R}}\psi_n )_{_{L^2}} \\
	&= ( \chi_{_{R}}\varphi_n, V^{2 \times 2}_{_{\text{FW}}} \chi_{_{R}}
	\varphi_n )_{_{L^2}} + (\fw^{-1} \chi_{_{R}}\fw \psi_n, V
	[\chi_{_{R}}, \fw^{-1}] \fw \psi_n )_{_{L^2}} \\
	&\qquad + ( [\chi_{_{R}}, \fw^{-1}] \fw \psi_n, V \chi_{_{R}}
	\psi_n )_{_{L^2}} \\
	&= ( \chi_{_{R}}\varphi_n, V^{2 \times 2}_{_{\text{FW}}} \chi_{_{R}}
	\varphi_n )_{_{L^2}} + ([\fw^{-1}, \chi_{_{R}}]\fw \psi_n, V
	[\chi_{_{R}}, \fw^{-1}]\fw \psi_n)_{_{L^{2}}} \\
	&\qquad + (\chi_{_{R}} \psi_n, V [\chi_{_{R}}, \fw^{-1}] \fw \psi_n
	)_{_{L^2}} + ( [\chi_{_{R}}, \fw^{-1}] \fw \psi_n, V
	\chi_{_{R}}\psi_n )_{_{L^2}} \\
	&= ( \chi_{_{R}}\varphi_n, V^{2 \times 2}_{_{\text{FW}}} \chi_{_{R}}
	\varphi_n )_{_{L^2}} -   | V^{1/2} [\fw^{-1}, \chi_{_{R}}]\fw \psi_n|^2_{_{L^{2}}} \\
	&\qquad + 2 \RE (\chi_{_{R}} \psi_n, V [\chi_{_{R}}, \fw^{-1}] \fw
	\psi_n )_{_{L^2}} \\
	    \end{align*}
	    
	    Hence, we have 
	     \begin{align*}
	\abs{( \psi_n, \chi^2_{_{R}} V \psi_n )_{_{L^2}} -  & (
	\chi_{_{R}} \varphi_n , V^{^{2 \times 2}}_{_\text{FW}}
	\chi_{_{R}} \varphi_n )_{_{L^2}}}  \leq    | V^{1/2} [\fw^{-1}, \chi_{_{R}}]\fw \psi_n|^2_{_{L^{2}}} \\
	&\qquad  \qquad   + 2 |  \RE (\chi_{_{R}} \psi_n, V [\chi_{_{R}}, \fw^{-1}] \fw
	\psi_n )_{_{L^2}} | \\
	& \leq  | V^{1/2} [\fw^{-1}, \chi_{_{R}}]\fw \psi_n|^2_{_{L^{2}}} \\
	&\qquad  \qquad  +  2|V^{1/2} [ \chi_{_{R}} ,\fw^{-1}]
	\fw \psi_n |_{_{L^{2}}} |V^{1/2} \chi_{_{R}} \psi_n
	|_{_{L^{2}}}
	 \end{align*}

    Using lemma \ref{lem:estimate_weakspace} we have
    \begin{align*}
	&\abs{V^{1/2} \chi_{_{R}} \psi_n}_{_{L^{2}}} \leq C
	\abs{V}^{1/2}_{_{L^3_{w}}} \abs{ \psi_n}_{_{H^{1/2}}}  \\
	&|V^{1/2} [ \chi_{_{R}} ,\fw^{-1}] \fw \psi_n |_{_{L^{2}}} \leq
	C | V |^{1/2}_{_{L^3_{w}}} | [\chi_{_{R}}, \fw^{-1}] \fw
	\psi_n |_{_{H^{1/2}}}
    \end{align*}
    and it follows from lemma \ref{lem:stima_commutatori} (Appendix
    \ref{sec:appB}) that
    \begin{equation*}
	| [ \chi_{_{R}} ,\fw^{-1}] \fw \psi_n |_{_{H^{1/2}}} \leq
	\frac{C}{R} | \psi_n |_{_{H^{1/2}}} 
    \end{equation*}
    and hence
     \begin{equation}
	\label{eq:stima_potenziale}
	\abs{( \psi_n, \chi^2_{_{R}} V \psi_n )_{_{L^2}} - (
	\chi_{_{R}} \varphi_n , V^{^{2 \times 2}}_{_\text{FW}}
	\chi_{_{R}} \varphi_n )_{_{L^2}}} \leq  \frac{C}{R}  | V |_{_{L^3_{w}}}  | \psi_n |^2_{_{H^{1/2}}}  
	\to 0 \quad {\text{as}} \, R \to +\infty.
    \end{equation}

       Step 1 follows.
    
    \subsection*{Step 2:} $\abs{(\chi_{_{R}}  \varphi_n, V^{^{2 \times 2}}_{_\text{FW}}
  \chi_{_{R}}   \varphi_n )}_{_{L^2}}  \to 0$ as $n \to
    +\infty$.

     We have, by assumption, $\mathcal{I}(\phi_n) \to \lambda$,
    $\mathcal{G}(\phi_n) = |\varphi_n|^2_{_{L^2}} =1 $ and 
    \begin{equation*}
	\norm{\nabla_{S} \mathcal{I}(\phi_{n})} = \|
	d\mathcal{I}(\phi_n ) - \mu_{n} d \mathcal{G}(\phi_n ) \| \to
	0.
    \end{equation*}
where $\mu_n = \mu(\phi_n) $ and also, by \eqref{eq:stime_moltiplicatore} in particular, 
\begin{equation}
    \mu_n = \mathcal{I}(\phi_n) -  \frac{1}{2} (\nabla_{S}
    \mathcal{I}(\phi_n),\phi_n)_{_{H^{1}}} \to \lambda. 
\end{equation}

     Using \eqref{eq:differenziale_commutato}  we have

    \begin{align*}
	o_n(1) &= \|\nabla \mathcal{I}(\phi_n) - \mu_{n}
	\nabla\mathcal{G} (\phi_n)\| \| \phi_n \|_{_{H^1}} \geq
	\abs{(\nabla \mathcal{I}(\phi_n) - \mu_{n} \nabla\mathcal{G}
	(\phi_n), \chi_{_{R}}^{2}\phi_n )_{_{H^1}}} \\
	&\geq \abs{d\mathcal{I}(\phi_n) [\chi^2_{_{R}} \phi_n]} -
	\abs{\mu_{n} 2 \RE (\varphi_n , \chi^2_{_{R}}
	\varphi_n)_{_{L^2}}} \\
	&\geq 2 \mathcal{I}(\chi_{_{R}}\phi_{n})  -  2 \abs{\mu_{n}}  \abs{\chi_{_{R}} \varphi_{n}}_{_{L^{2}}}^{2} -
	2c^{2}  |\phi_{n}\nabla_{y}\chi_{_{R}}|^{2}_{_{L^{2}}} \\
	&  - 2 \abs{(V \psi_{n}, [\fw^{-1}, \chi_{_{R}}] \chi_{_{R}} \fw
	\psi_{n})_{_{L^{2}}} } 
	 - 2 \abs{([\chi_{_{R}}, \fw^{-1}] \fw \psi_{n}, V
	\fw^{-1} \chi_{_{R}} \fw \psi_{n})_{_{L^{2}}} }  \\
	&\geq 2 \mathcal{I}(\chi_{_{R}}\phi_{n})  -  2 \abs{\mu_{n}}  \abs{\chi_{_{R}} \varphi_{n}}_{_{L^{2}}}^{2}  - (I)-(II)-(III)
  \end{align*}
    
    Now, by Sobolev compact embedding, for any given $R >0$,
    \begin{equation*}
	| \chi_{_{R}} \varphi_n |_{_{L^2}}
	\to 0 \qquad \text{as} \quad n \to + \infty.
     \end{equation*}
     
     Moreover, 
      \begin{equation*}
       (I)  =  2c^{2}\abs{\phi_{n}\nabla \chi_{_{R}}}^{2}_{_{L^{2}}}  \leq  2 c^2 \sup_{y \in \R^3}  \abs{\nabla \chi_{_{R}}}^{2} \leq \frac{C}{R^2}\\
     \end{equation*}
     and  from lemma \ref{lem:stima_commutatori} (Appendix
    \ref{sec:appB}) we have

     \begin{align*}
    (II) &= 2\abs{(V \psi_{n}, [\fw^{-1}, \chi_{_{R}}]  \chi_{_{R}} \fw
	\psi_{n})_{_{L^{2}}}} \\
	&\leq  |V^{1/2} [ \chi_{_{R}} ,U_{_{FW}}^{-1}]
	 \chi_{_{R}} U_{_{FW}} \psi_n |_{_{L^{2}}} |V^{1/2}
	 \psi_n|_{_{L^{2}}} \\
	 & \leq \frac{C}{R}  | V |_{_{L^3_{w}}}  | \psi_n |^2_{_{H^{1/2}}}  \\
	    (III)&= 2 \abs{([\chi_{_{R}}, \fw^{-1}] \fw \psi_{n}, V 
	\fw^{-1} \chi_{_{R}} \fw \psi_{n})_{_{L^{2}}} } \\ 
	&\leq  |V^{1/2} [ \chi_{_{R}} ,U_{_{FW}}^{-1}]
	 U_{_{FW}} \psi_n |_{_{L^{2}}} |V^{1/2} U_{_{FW}}^{-1}
	 \chi_{_{R}} U_{_{FW}} \psi_n |_{_{L^{2}}} \\
	& \leq \frac{C}{R}  | V |_{_{L^3_{w}}}  | \psi_n |^2_{_{H^{1/2}}}  .
    \end{align*}

    Since by Lemma \ref{lem:coercive}-(i) we have
    \begin{equation*}
	\mathcal{I}(\chi_{_{R}} \phi_n) \geq \delta \|\chi_{_{R}}
	\phi_n \|^2_{_{H^1}}
    \end{equation*}
    we may conclude (recalling that $\mu_{n}$ is bounded) that
      \begin{equation*}
	\|\chi_{_{R}} \phi_n \|^2_{_{H^1}} 
	\leq \epsilon_n + \frac{C}{R}.
    \end{equation*} 
   and hence by {\bf (h2)} and lemma \ref{lem:betterminimizer} we get
    \begin{equation*}
	| ( \chi_{_{R}} \varphi_n , V^{^{2 \times 2}}_{_\text{FW}}
	\chi_{_{R}} \varphi_n )_{_{L^2}} | \leq a \|\chi_{_{R}} \phi_n
	\|^2_{_{H^1}} \leq \epsilon_n +
	\frac{C}{R}
    \end{equation*} 
   for some $\epsilon_{n} \to 0$ and $R$ arbitrarily large,  and step 2 follows. Then the  lemma follows from  step 1 and 2. 
\end{proof}

\begin{rem}
    \label{eq:positivita_quadratica}
    We recall that for all $v \in 
    C^{\infty}_{0}(\mathbb{R}^{4})$
    \begin{equation*}
	\int_{\mathbb{R}^{3}} |v(0,y)|^{2} dy = \int_{\mathbb{R}^{3}}
	dy \int_{+\infty}^{0} \partial_{x} |v|^{2} dx \leq 2 \| v
	\|_{_{L^2(\mathbb{R}^{4}_{+})}} \| \partial_{x} v
	\|_{_{L^2(\mathbb{R}^{4}_{+})}} 
    \end{equation*}
    and by density we get for all $\phi \in	H^1$
    \begin{equation*}
	m c^2 \int_{\mathbb{R}^{3}}
	|\phi_{_{\text{tr}}}|^{2} \, dy \leq
	\iint_{\R^4_{+}} ( |\partial_{x} \phi |^2+ m^2
	c^4 |\phi |^2) \, dx dy 
	   \end{equation*}
   Hence the quadratic form (kinetic energy)
   \begin{equation*}
       \mathcal{T}(\phi) =  \iint_{\R^4_{+}} ( |\partial_{x}
       \phi |^2 + m^2 c^4 |\phi |^2 ) \, dx dy
       - mc^2 \| \phi_{_{tr}} \|^2_{_{L^2}} 
   \end{equation*}
   is positive definite.
\end{rem}
 
Now we may conclude the existence of a minimizer for
$\mathcal{P}_{_{1}}$.  We have the following proposition:

\begin{prop}
    \label{prop:ground_state}
    Let $\phi_n $ be a minimizing Palais Smale sequence at level
    $\lambda_1 > 0$ for $\mathcal{I}$ with $ |(\phi_n)_{tr}|_{_{L^2}}
    =1 $ (as in Lemma \ref{lem:potential_term}).
    
    Then $\phi_n \rightharpoonup \phi \not \equiv 0$ in $H^1$ and $\hat{\phi} = \abs{\phi}_{_{L^{2}}}^{-1}\phi$ is
    a minimizer for $\mathcal{I}$ on $S$, that is
    \begin{equation*}
	\mathcal{I} (\hat{\phi}) = \lambda_1, \qquad
	\abs{\hat{\phi}}_{_{L^{2}}} = 1.
    \end{equation*}
    
    Moreover $\hat{\phi}$ (and hence also $\phi$) is a weak solution
    of the Neumann problem $(\mathcal{E}_{_{1}})$.
\end{prop}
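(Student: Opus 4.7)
The strategy is to combine the coercivity of $\mathcal{I}$, the compactness information furnished by Lemma~\ref{lem:potential_term}, and the strict inequality $\lambda_{1}<mc^{2}$ to produce a non-trivial weak limit of the Palais--Smale sequence and then identify it, after normalization, with a minimizer solving $(\mathcal{E}_{_{1}})$.

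\emph{Step 1: weak limit.} By Lemma~\ref{lem:coercive}-(i), $\mathcal{I}$ is coercive on $H^{1}$, so $\phi_{n}$ is bounded. Along a subsequence $\phi_{n}\rightharpoonup\phi$ in $H^{1}$; by continuity of the trace, $\varphi_{n}=(\phi_{n})_{tr}\rightharpoonup\varphi=\phi_{tr}$ in $H^{1/2}$ and weakly in $L^{2}$.

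\emph{Step 2: the limit has non-trivial trace.} I would argue by contradiction. Assume $\varphi\equiv 0$. Lemma~\ref{lem:potential_term} then gives $(\varphi_{n},\Vfw\varphi_{n})_{L^{2}}\to 0$, while Lemma~\ref{lem:betterminimizer} applied to $\phi_{n}$ yields
\begin{equation*}
\|\phi_{n}\|^{2}_{_{H^{1}}}\;\geq\;\int_{\mathbb{R}^{3}}\sqrt{c^{2}|p|^{2}+m^{2}c^{4}}\,|\hat{\varphi}_{n}(p)|^{2}\,dp\;\geq\;mc^{2}\abs{\varphi_{n}}_{_{L^{2}}}^{2}=mc^{2}.
\end{equation*}
Consequently $\lambda_{1}=\lim\mathcal{I}(\phi_{n})\geq mc^{2}$, contradicting Lemma~\ref{lem:coercive}-(ii). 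Hence $\varphi\not\equiv 0$ and a fortiori $\phi\not\equiv 0$. This is the decisive step: because $V$ is not compact from $H^{1/2}$ into $H^{-1/2}$, ordinary weak convergence does not suffice, and Lemma~\ref{lem:potential_term} is precisely the replacement that exploits the Palais--Smale structure.

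\emph{Step 3: Euler--Lagrange equation in the limit.} From \eqref{eq:stime_moltiplicatore} and the Palais--Smale property, $\mu_{n}\to\lambda_{1}$. For every fixed $h\in H^{1}$,
\begin{equation*}
d\mathcal{I}(\phi_{n})[h]-2\mu_{n}\RE(\varphi_{n},h_{tr})_{_{L^{2}}}=o(1)\|h\|_{_{H^{1}}}.
\end{equation*}
The quadratic part $2\RE(\phi_{n},h)_{_{H^{1}}}\to 2\RE(\phi,h)_{_{H^{1}}}$ by weak convergence; the term $(\varphi_{n},h_{tr})_{_{L^{2}}}\to(\varphi,h_{tr})_{_{L^{2}}}$ by weak $L^{2}$-convergence; and, by Lemma~\ref{lem:estimate_weakspace}, $\Vfw h_{tr}$ defines a bounded linear functional on $H^{1/2}$, so $(\varphi_{n},\Vfw h_{tr})_{_{L^{2}}}\to(\varphi,\Vfw h_{tr})_{_{L^{2}}}$. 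Passing to the limit gives
\begin{equation*}
d\mathcal{I}(\phi)[h]=2\lambda_{1}\RE(\varphi,h_{tr})_{_{L^{2}}}\qquad\forall\,h\in H^{1},
\end{equation*}
which is precisely the weak formulation of $(\mathcal{E}_{_{1}})$ at level $\lambda_{1}$.

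\emph{Step 4: normalization and minimality.} Testing the Euler equation with $h=\phi$ yields $2\mathcal{I}(\phi)=2\lambda_{1}\abs{\varphi}_{_{L^{2}}}^{2}$. Set $\beta=\abs{\varphi}_{_{L^{2}}}$, which lies in $(0,1]$ (positive by Step~2, bounded by $1$ by weak lower semicontinuity in $L^{2}$), and put $\hat{\phi}=\phi/\beta$. Then $\hat{\phi}\in S$ and
\begin{equation*}
\mathcal{I}(\hat{\phi})=\mathcal{I}(\phi)/\beta^{2}=\lambda_{1},
\end{equation*}
so $\hat{\phi}$ attains the infimum. The Neumann problem $(\mathcal{E}_{_{1}})$ for $\hat{\phi}$ then follows from Step~3 by linearity of the equation.
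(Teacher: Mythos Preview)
Your argument is correct and follows essentially the same route as the paper's proof: coercivity gives a weak limit, Lemma~\ref{lem:potential_term} together with the strict gap $\lambda_{1}<mc^{2}$ forces the limit trace to be nonzero, the Lagrange multiplier converges via \eqref{eq:stime_moltiplicatore}, and testing the limiting Euler equation against $\phi$ yields $\mathcal{I}(\phi)=\lambda_{1}\abs{\varphi}_{L^{2}}^{2}$, whence the normalized function is a minimizer. The only cosmetic difference is that in Step~2 you invoke Lemma~\ref{lem:betterminimizer} for the bound $\|\phi_{n}\|_{H^{1}}^{2}\geq mc^{2}$, whereas the paper uses the elementary trace inequality of Remark~\ref{eq:positivita_quadratica}; both yield the same contradiction.
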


\begin{proof}
    Since $ \mathcal{I}$ is coercive, $\phi_n $ is bounded (and weakly
    convergent) in $H^{1}$, $\varphi_n =
    (\phi_{n})_{tr}$ is bounded (and weakly convergent)
    in $H^{1/2}$.
 
    If by contradiction $\varphi_n \rightharpoonup \varphi \equiv 0$,
    then by lemma \ref{lem:potential_term} we have
    \begin{equation*}
	( \varphi_n, V^{^{2 \times 2}}_{_\text{FW}} \varphi_n
	)_{_{L^2}} \to 0.
    \end{equation*} 
    Now, by Remark \ref{eq:positivita_quadratica} we get
    \begin{equation*}
	\mathcal{I}(\phi_n) - mc^2 | \varphi_n |^2_{_{L^2}} \geq
	( \varphi_n, V^{^{2 \times 2}}_{_\text{FW}} \varphi_n
	)_{_{L^2}} \to 0 .
   \end{equation*}
   On the other hand, by Lemma \ref{lem:coercive}-(ii)
   \begin{equation*}
       \mathcal{I}(\phi_n) - mc^2 | \varphi_n |^2_{_{L^2}} =
       \mathcal{I}(\phi_n) - mc^2 \to \lambda_1 - mc^2 < 0
   \end{equation*}
   a contradiction, that is  $ \varphi_n \rightharpoonup \varphi \not
   \equiv 0$.

   It follows from \eqref{eq:stime_moltiplicatore} that
   \begin{equation*}
       \mu_{n} = \mathcal{I}(\phi_{n}) - 
     \frac{1}{2}  (\nabla_{S}\mathcal{I}(\phi_{n}), \phi_{n})_{_{H^1}} \to \lambda_{1}
   \end{equation*}
   and hence, by weak convergence, we have
   \begin{equation*}
       d\mathcal{I}(\phi_n)[h] - \mu_{n} d\mathcal{G}(\phi_{n})[h] \to
       d\mathcal{I}(\phi) [h] - \lambda_1 d\mathcal{G}(\phi) [h] = 0
       \quad \forall h \in H^{1}
          \end{equation*}
   hence in particular
   \begin{equation*}
       0= d\mathcal{I}(\phi) [\phi] - \lambda_1 d\mathcal{G}(\phi)
       [\phi] = 2 \mathcal{I}(\phi) - 2 \lambda_1 \mathcal{G}(\phi)
   \end{equation*} 
   and we may conclude that $\hat{\phi} =
   \mathcal{G}(\phi)^{-1/2}\phi$ is a minimizer for $\mathcal{I}$ on
   $S$, namely
   \begin{align*}
       &\lambda_1 = \frac{\mathcal{I} (\phi)}{\mathcal{G}(\phi)} =
       \mathcal{I}(\mathcal{G}(\phi)^{-1/2}\phi) = \mathcal{I}(\hat{\phi})\\
       &\mathcal{G}(\hat{\phi}) =
       \mathcal{G}(\mathcal{G}(\phi)^{-1/2}\phi) = 1
   \end{align*}
\end{proof}
 
Now, we look for the existence of higher eigenvalues and corresponding
eigenfunctions.  We proceed by induction.

Let $\lambda_{1}$ be defined by \eqref{eq:p1} and $\phi_{1}$ be the
corresponding minimizer given by Proposition \ref{prop:ground_state}.

Assume we have defined, for $j=1, \dots, k-1$, $\lambda_1 \leq \dots \lambda_j < m c^2 $ and $\phi_j \in H^{1}$,
$\varphi_j = (\phi_{j})_{tr} \in
H^{1/2}$ such that
\begin{equation*}
    (\varphi_{i}, \varphi_{j})_{L^{2}} = \delta_{ij}, \qquad i,j =
    1,\ldots,k-1,
\end{equation*}
and
\begin{equation}
    \label{eq:pj}
    \tag{$\mathcal{P}_{_{j}}$} \lambda_{j} = \mathcal{I}(\phi_{j}) =
    \inf_{_{ \phi \in X_{j}} } {\mathcal{I} (\phi)} \qquad j = 1,
    \ldots, k-1
\end{equation}
where,
\begin{equation*}
    X_{j} = \settc{\phi \in H^{1}}{\mathcal{G}(\phi) =
    \abs{\phi_{tr}}_{L^{2}}^{2} = 1, \ (\phi_{tr}, \varphi_i)_{_{L^2}}
    = 0 \quad \text{for} \quad i=1, \dots, j-1}.
\end{equation*}

We define 
\begin{equation}
    \label{eq:pk}
    \tag{$\mathcal{P}_{_{k}}$} \lambda_{k} = \inf_{_{ \phi \in X_{k}}
    } \, \mathcal{I} (\phi)
\end{equation}
 
\begin{rem}
    Setting $\mathcal{G}_j(\phi) = (\varphi_j, \phi_{tr})_{_{L^2}}$,
    for $j \geq 1$, we have that the linear functionals
    $\mathcal{G}_j$ are bounded on $H^{1}$ and for any $\phi , h \in H^1$
    \begin{equation*}
	d\mathcal{G}_j (\phi)[h ] = (\nabla
	\mathcal{G}_{j} (\phi), h)_{_{H^{1}}} = ( \varphi_j, h_{_{tr}}
	)_{_{L^2}}  = \mathcal{G}_{j}(h) \qquad j= 1, \ldots k-1.
    \end{equation*}
    Then $X_{k} = \settc{\phi \in X_{1}}{\mathcal{G}(\phi) = 1, \
    \mathcal{G}_{j}(\phi) = 0, \ j = 1, \ldots, k-1}$,
    \begin{equation*}
	T_{\phi} X_{k} = \settc{h \in H^{1}}{(\nabla
	\mathcal{G}(\phi), h)_{H^{1}} = 0, \ \mathcal{G}_{j}(h) = 0, \
	j=1,\ldots,k-1}
    \end{equation*}
    and the constrained gradient (i.e. the projection of the gradient
    of $\mathcal{I}$ on the tangent space $T_{\phi} X_{k}$) is given
    by
    \begin{equation*}
	\nabla_{X_{k}} \mathcal{I}(\phi) = \nabla \mathcal{I}(\phi) -
	\mu_{0}(\phi) \nabla \mathcal{G}(\phi) - \sum_{j=1}^{k-1}
	\mu_{j}(\phi) \nabla \mathcal{G}_{j}(\phi).
    \end{equation*}
    From
    \begin{align*}
	(\nabla_{X_{k}} \mathcal{I}(\phi), \phi)_{_{H^{1}}} &= (\nabla
	\mathcal{I}(\phi), \phi)_{_{H^{1}}} - \mu_{0}(\phi) (\nabla
	\mathcal{G}(\phi), \phi)_{_{H^{1}}} - \sum_{j=1}^{k-1}
	\mu_{j}(\phi) (\nabla \mathcal{G}_{j}(\phi), \phi)_{_{H^{1}}} \\
	&= 2\mathcal{I}(\phi) - 2\mu_{0} (\phi)\mathcal{G}(\phi) -
	\sum_{j=1}^{k-1} \mu_{j}(\phi) \mathcal{G}_{j}(\phi) =
	2\mathcal{I}(\phi) - 2\mu_{0}(\phi) 
    \end{align*}
     for $\phi \in X_{k}$,  we deduce that 
    \begin{equation}
	\label{eq:mu0}
	\mu_{0}(\phi) = \mathcal{I}(\phi) - \frac{1}{2}
	(\nabla_{X_{k}} \mathcal{I}(\phi), \phi)_{_{H^{1}}}
    \end{equation}
     while for $\phi \in X_{k}$ and $\varphi_{i} = (\phi_i)_{tr} $,  for $i = 1, \dots,  k-1$, from
 
     \begin{align*}
	(\nabla_{X_{k}} \mathcal{I}(\phi), \phi_{i})_{_{H^{1}}} = &
	(\nabla \mathcal{I}(\phi), \phi_{i})_{_{H^{1}}} - \mu_{0}(\phi)
	(\nabla \mathcal{G}(\phi), \phi_{i})_{_{H^{1}}} -
	\sum_{j=1}^{k-1} \mu_{j}(\phi) (\nabla \mathcal{G}_{j}(\phi),
	\phi_{i})_{_{H^{1}}} \\
	= & d\mathcal{I}(\phi)[\phi_{i} ] - \mu_{0} (\phi) 2  \RE (\phi_{tr},
	\varphi_{i})_{_{L^{2}}} -  \sum_{j=1}^{k-1} \mu_{j}(\phi) (\varphi_j, \varphi_i)_{_{L^2}}\\
	=&  d\mathcal{I}(\phi)[\phi_{i} ] -  \mu_i(\phi)
  \end{align*}
    
   we have that
   
    \begin{equation}
	\label{eq:muJ}
	\mu_{i}(\phi)  = d\mathcal{I}(\phi)[\phi_{i} ] 	- (\nabla_{X_{k}} \mathcal{I}(\phi), \phi_{i})_{_{H^{1}}}
    \end{equation}

    We say that $ \phi_n \in X_k$ is a (constrained) Palais Smale
    sequence for $\mathcal{I}$ on $X_{k}$ at level $\lambda_k$ if
    $\phi_{n} \in X_{k}$,
    \begin{equation*}
	\mathcal{I}(\phi_n) \to \lambda_k \quad \text{and} \quad
	\|\nabla_{_{X_{k}}} \mathcal{I} (\phi_n) \| \to 0.
    \end{equation*}
\end{rem}

The proof of existence of a minimizer for \eqref{eq:pk}
proceeds as the proof of the existence of the
ground state $\phi_1$.  The key points are the following two lemmas.
\begin{lem}
    $\lambda_1\leq \lambda_{k} < m c^2$.
\end{lem}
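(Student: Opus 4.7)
The inequality $\lambda_{1}\le\lambda_{k}$ is immediate from the definitions: the constraints defining $X_{k}$ include all those of $X_{1}$ together with the extra orthogonality conditions $(\phi_{tr},\varphi_{i})_{L^{2}}=0$, hence $X_{k}\subset X_{1}$ and $\lambda_{k}=\inf_{X_{k}}\mathcal{I}\ge\inf_{X_{1}}\mathcal{I}=\lambda_{1}$. The harder statement is $\lambda_{k}<mc^{2}$, and the plan is to adapt the test-function construction used in Lemma~\ref{lem:coercive}-(ii), Gram--Schmidt the trial function against the previously produced eigenfunctions $\varphi_{1},\dots,\varphi_{k-1}$, and then verify that the resulting correction is of lower order.

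Concretely, I would pick $\varphi\in C^{\infty}_{0}(\R^{3},\C^{2})$ with $\abs{\varphi}_{L^{2}}=1$ whose support is contained in an annulus $\{r\le\abs{z}\le R\}$ bounded away from the origin, and set $\varphi_{\eta}(y)=\eta^{3/2}\varphi(\eta y)$, $\phi_{\eta}(x,y)=e^{-mc^{2}x}\varphi_{\eta}(y)$. Repeating the computation of Lemma~\ref{lem:coercive}-(ii), and exploiting (\textbf{h3})(ii) with $K$ arbitrarily large (so that $V(y)\le-K/\abs{y}^{2}$ on $\mathrm{supp}(\varphi_{\eta})\subset\{\abs{y}\ge r/\eta\}$ for $\eta$ small enough depending on $K$), one obtains
\[
\mathcal{I}(\phi_{\eta})-mc^{2}\le-A(K)\,\eta^{2}+O(\eta^{3}),\qquad A(K)\to+\infty\ \text{as}\ K\to\infty.
\]

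Next, set $a_{j}(\eta)=(\varphi_{\eta},\varphi_{j})_{L^{2}}$ for $j=1,\dots,k-1$ and
\[
\tilde\phi_{\eta}=\phi_{\eta}-\sum_{j=1}^{k-1}a_{j}(\eta)\,\phi_{j}\in H^{1},
\]
so that $(\tilde\phi_{\eta})_{tr}$ is $L^{2}$-orthogonal to each $\varphi_{i}$. By the inductive hypothesis each $\phi_{j}$ is a weak solution of the Neumann problem $(\mathcal{E}_{j})$, so an integration by parts yields the crucial bilinear identity $B(\phi_{j},h)=\lambda_{j}(\varphi_{j},h_{tr})_{L^{2}}$ for every $h\in H^{1}$, where $B$ is the sesquilinear form associated to $\mathcal{I}$. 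Combined with the orthonormality $(\varphi_{i},\varphi_{j})_{L^{2}}=\delta_{ij}$, a direct expansion gives $\mathcal{I}(\tilde\phi_{\eta})=\mathcal{I}(\phi_{\eta})-\sum_{j=1}^{k-1}\lambda_{j}\abs{a_{j}(\eta)}^{2}$. After normalising $\hat\phi_{\eta}:=\tilde\phi_{\eta}/\abs{(\tilde\phi_{\eta})_{tr}}_{L^{2}}$, for which $\abs{(\tilde\phi_{\eta})_{tr}}_{L^{2}}^{2}=1-\sum_{j}\abs{a_{j}(\eta)}^{2}$, one has $\hat\phi_{\eta}\in X_{k}$ and
\[
\mathcal{I}(\hat\phi_{\eta})-mc^{2}=\frac{\mathcal{I}(\phi_{\eta})-mc^{2}+\sum_{j=1}^{k-1}(mc^{2}-\lambda_{j})\abs{a_{j}(\eta)}^{2}}{1-\sum_{j=1}^{k-1}\abs{a_{j}(\eta)}^{2}}.
\]

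The main obstacle is to verify that the positive Gram--Schmidt correction is eventually dominated by the negative gain $-A(K)\eta^{2}$. Because $\mathrm{supp}(\varphi_{\eta})\subset\{\abs{y}\ge r/\eta\}$ recedes to infinity, Cauchy--Schwarz gives $\abs{a_{j}(\eta)}\le\abs{\varphi_{j}}_{L^{2}(\{\abs{y}\ge r/\eta\})}\to 0$; since $\lambda_{j}<mc^{2}=\inf\sigma_{\text{ess}}(\mathcal{B}_{_\text{FW}})$, each eigenfunction $\varphi_{j}$ decays exponentially at infinity (see \cite{Bach_Matte_2001}), so in fact $\abs{a_{j}(\eta)}^{2}=o(\eta^{2})$. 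Choosing $K$ large and then $\eta$ small therefore makes the numerator strictly negative, and we conclude $\lambda_{k}\le\mathcal{I}(\hat\phi_{\eta})<mc^{2}$.
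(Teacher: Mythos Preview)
Your approach is correct in outline but genuinely different from the paper's, and it leans on an ingredient the paper deliberately does without. You take a single rescaled test function and Gram--Schmidt it against $\varphi_{1},\dots,\varphi_{k-1}$; the success of this step depends on the correction $\sum_{j}(mc^{2}-\lambda_{j})\abs{a_{j}(\eta)}^{2}$ being $o(\eta^{2})$, and to obtain that rate you invoke exponential decay of the eigenfunctions from \cite{Bach_Matte_2001}. Note that mere $L^{2}$-membership of $\varphi_{j}$ only gives $a_{j}(\eta)\to 0$ with no rate, which is not enough: since $\mathcal{I}(\phi_{\eta})-mc^{2}\to 0$ as well, you cannot balance the two terms without quantitative decay. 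So your argument really needs that external result, and one would have to check that \cite{Bach_Matte_2001} (stated for the Bethe--Salpeter/Coulomb setting) covers the general hypotheses (h1)--(h3) here; the paper itself flags exponential decay as a separate issue rather than using it.

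The paper's proof sidesteps this entirely by a dimension-counting trick. Instead of one test function it picks any $k$-dimensional subspace $G_{k}\subset C^{\infty}_{0}(\R^{3};\C^{2})$, rescales as you do, and uses the \emph{compactness} of the sphere $G_{k}\cap S$ to obtain a single $\bar\eta$ for which $\mathcal{I}(\phi_{\bar\eta})<mc^{2}$ holds \emph{uniformly} over all $\varphi\in G_{k}\cap S$. Then the $k-1$ complex-linear constraints $(\,\cdot\,,\varphi_{i})_{L^{2}}=0$ cut the $k$-dimensional space $\{\varphi_{\bar\eta}:\varphi\in G_{k}\}$ down to a subspace of dimension at least one, so $X_{k}\cap F^{\bar\eta}_{k}\neq\emptyset$ and $\lambda_{k}\le\sup_{F^{\bar\eta}_{k}}\mathcal{I}<mc^{2}$. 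This needs no information whatsoever about the previously constructed eigenfunctions beyond their existence, which is what makes it the cleaner argument for an inductive proof.
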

 
\begin{proof}
    Let us consider any $k$-dimensional linear subspace $G_{k} \subset
    C^{\infty}_{0}(\R^{3}; \C^2 )$. 

    For $\varphi \in G_{k} \cap S$ and $\eta > 0$ we let
    $\varphi_{\eta}(y) = \eta^{3/2} \varphi(\eta y) \in S$ and
    \begin{equation*}
	F^{\eta}_{k} = \settc{ \phi_{_{\eta}} \in
	H^{1}} {\phi_{_{\eta}} (x,y) = \esp^{-
	mc^2 x} \varphi_{_{\eta}}(y) , \quad \varphi \in G_{k} \cap S}.
    \end{equation*}
    
    Then, for any $\phi_{_{\eta}} \in F^{\eta}_{k}$
    \begin{align*}
	\mathcal{I} (\phi_{_{\eta}}) - mc^2 & =\frac{1}{2 m}
	\int_{\R^3} |\nabla \varphi_{_{\eta}}|^2+
	\int_{\R^3} ( \varphi_{_{\eta}}, V^{^{2 \times 2}}_{_\text{FW}}
	\varphi_{_{\eta}})_{_{\C^2}} \\
	&=  \frac{\eta ^2}{2 m} \int_{\R^3} |\nabla \varphi
	|^2 + \int_{\R^3} ( \varphi_{_{\eta}}, V^{^{2 \times
	2}}_{_\text{FW}} \varphi_{_{\eta}} )_{_{\C^2}}
    \end{align*}
    Arguing as in Lemma \ref{lem:coercive}-(ii) and by compactness of
    the set $G_{k} \cap S$, there exists $\bar{\eta} >0$ such that for
    any $\phi_{_{\bar{\eta}}} \in F^{\bar{\eta}}_{k}$, we have
    \begin{equation*}
	{\bar{\eta} ^2} \frac{1}{2 m} \int_{\R^3} |\nabla \varphi
	|^2+ \int_{\R^3} ( \varphi_{_{\bar{\eta} }}, V^{^{2
	\times 2}}_{_\text{FW}} \varphi_{_{\bar{\eta} }} )_{_{\C^2}} <
	0
    \end{equation*}
    
    Since  $X_{k} \cap F^{\bar{\eta}}_{k} \not= \emptyset$, we have 
    $\lambda_{k} \leq \sup_{ F^{\bar{\eta}}_{k}} \mathcal{I} (
    \phi_{_{\bar{\eta}}}) < m c^2$ .
\end{proof}
 
\begin{lem}
    \label{lem:potential_term_k}
    Let $\zeta_n \in X_{k}$ be a (constrained) Palais Smale sequence at
    level $\lambda_k$ for $\mathcal{I}$ on $X_{k}$.  

    Then, as $n \to + \infty$
       \begin{equation*}
        \mu_{0}(\zeta_{n})  \to \lambda_{k}  \qquad \qquad 
        \mu_{j}(\zeta_{n})  \to 0  \qquad (j= 1, \dots, k-1)
    \end{equation*}

     Moreover, if $\xi_{n}  = 
    (\zeta_n)_{tr} \rightharpoonup 0$ in $H^{1/2}$ then
    \begin{equation*}
	( \xi_n, V^{^{2 \times 2}}_{_\text{FW}} \xi_n
	)_{_{L^2}} \to 0.
    \end{equation*} 
\end{lem}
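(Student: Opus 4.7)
The plan is to handle the two assertions in sequence, using the first to feed into the second. Throughout, coercivity of $\mathcal{I}$ (Lemma \ref{lem:coercive}(i)) ensures that $\{\zeta_n\}$ is bounded in $H^1$ and hence $\{\xi_n\}$ is bounded in $H^{1/2}$.

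The convergence $\mu_0(\zeta_n) \to \lambda_k$ is immediate from \eqref{eq:mu0}, since $\mathcal{I}(\zeta_n) \to \lambda_k$, $\|\nabla_{X_k}\mathcal{I}(\zeta_n)\| \to 0$, and $\|\zeta_n\|_{_{H^1}}$ is bounded. For $j = 1,\dots,k-1$, I would argue from \eqref{eq:muJ}: the term $(\nabla_{X_k}\mathcal{I}(\zeta_n), \phi_j)_{_{H^1}}$ is $o(1)$ because $\|\phi_j\|_{_{H^1}}$ is fixed, so it suffices to show $d\mathcal{I}(\zeta_n)[\phi_j] = 0$. Integrating by parts in the bulk part of $d\mathcal{I}(\zeta_n)[\phi_j]$ and invoking the weak form of the Neumann problem $(\mathcal{E}_j)$ satisfied by $\phi_j$ (available by the inductive construction), one obtains
\begin{equation*}
\iint_{\R^4_+}\bigl[(\partial_x \zeta_n, \partial_x \phi_j) + c^2 (\nabla_y \zeta_n, \nabla_y \phi_j) + m^2 c^4 (\zeta_n, \phi_j)\bigr] = \lambda_j (\xi_n, \varphi_j)_{_{L^2}} - (\xi_n, V^{^{2\times 2}}_{_\text{FW}}\varphi_j)_{_{L^2}}.
\end{equation*}
The orthogonality constraint $\zeta_n \in X_k$ forces $(\xi_n, \varphi_j)_{_{L^2}} = 0$, and the remaining potential term is cancelled exactly by the trace contribution $2\RE(\xi_n, V^{^{2\times 2}}_{_\text{FW}}\varphi_j)_{_{L^2}}$ in $d\mathcal{I}(\zeta_n)[\phi_j]$, so $d\mathcal{I}(\zeta_n)[\phi_j] = 0$ and consequently $\mu_j(\zeta_n) \to 0$.

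For the second assertion I would mirror the two-step scheme of Lemma \ref{lem:potential_term}. Step 1, the uniform localization bound analogous to \eqref{eq:stima_potenziale}, relies only on the $H^{1/2}$-boundedness of $\xi_n$, on (h3) and on the commutator estimate of Lemma \ref{lem:stima_commutatori}, and therefore transfers verbatim. Step 2 tests the constrained Palais-Smale condition against $\chi_R^2 \zeta_n$; combining with the commutator identity \eqref{eq:differenziale_commutato} produces, beyond the terms already handled in Lemma \ref{lem:potential_term}, extra contributions of the form $2\mu_j(\zeta_n)\,\mathcal{G}_j(\chi_R^2 \zeta_n)$ for $j = 1,\dots,k-1$. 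Using the orthogonality constraint I would rewrite
\begin{equation*}
\mathcal{G}_j(\chi_R^2 \zeta_n) = (\varphi_j, \chi_R^2 \xi_n)_{_{L^2}} = ((\chi_R^2 - 1)\varphi_j, \xi_n)_{_{L^2}},
\end{equation*}
which is bounded uniformly in $n$ and $R$ by $\|\varphi_j\|_{_{L^2}}\|\xi_n\|_{_{L^2}}$; paired with $\mu_j(\zeta_n) \to 0$ from the previous paragraph, these extra terms are $o(1)$ as $n \to \infty$, uniformly in $R$. The rest of the argument then proceeds exactly as in Lemma \ref{lem:potential_term}: Sobolev compact embedding yields $|\chi_R \xi_n|_{_{L^2}} \to 0$ for each fixed $R$, and (h2) combined with Lemma \ref{lem:betterminimizer} gives $\|\chi_R \zeta_n\|_{_{H^1}}^2 \leq \varepsilon_n + C/R$, from which $|(\chi_R \xi_n, V^{^{2\times 2}}_{_\text{FW}}\chi_R \xi_n)_{_{L^2}}| \to 0$ follows.

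The main obstacle compared with the $k=1$ case of Lemma \ref{lem:potential_term} is the presence of the orthogonality-induced Lagrange multipliers $\mu_1,\dots,\mu_{k-1}$, which introduce terms not present in the ground-state argument. The decisive observation is that Part 1, which relies on the Euler-Lagrange equations $(\mathcal{E}_j)$ supplied by the inductive construction of $\phi_j$, forces these multipliers to vanish in the limit, so the additional terms do not spoil the coercivity estimate inherited from Lemma \ref{lem:potential_term}.
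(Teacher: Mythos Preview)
Your proposal is correct and follows essentially the same route as the paper. The paper compresses your Part~1 computation into the single line $d\mathcal{I}(\zeta_n)[\phi_j] = d\mathcal{I}(\phi_j)[\zeta_n] = 2\lambda_j \RE(\xi_n,\varphi_j)_{L^2} = 0$ (symmetry of the quadratic form plus the weak equation for $\phi_j$), and in Step~2 it controls the extra multiplier terms via $\abs{(\varphi_j,\chi_R^2\xi_n)} \leq \abs{\chi_R\xi_n}_{L^2} \to 0$ together with mere boundedness of $\mu_j$, rather than your use of $\mu_j\to 0$ with a uniform bound on $\mathcal{G}_j(\chi_R^2\zeta_n)$ --- but these are cosmetic variants of the same argument.
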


\begin{proof}
    We have that $\zeta_{n} \in X_{k}$ is such that 
    \begin{equation*}
	\mathcal{I}(\zeta_n) \to \lambda_k \quad \text{and} \quad
	\|\nabla_{_{X_{k}}} \mathcal{I} (\zeta_n) \| \to 0.
    \end{equation*}

    Then $\zeta_{n}$ is bounded and from \eqref{eq:mu0} and
    \eqref{eq:muJ} we have, as $n \to + \infty$
    \begin{align*}
	\mu_{0}(\zeta_{n}) &= \mathcal{I}(\zeta_{n}) - \frac{1}{2}
	(\nabla_{X_{k}} \mathcal{I}(\zeta_{n}), \zeta_{n})_{_{H^{1}}}
	\to \lambda_k \\
	\mu_{j}(\zeta_{n}) &= d\mathcal{I}(\zeta_{n})[\phi_j] -
	(\nabla_{X_{k}} \mathcal{I}(\zeta_{n}), \phi_{j})_{_{H^{1}}} =
	d\mathcal{I}(\phi_j)[\zeta_{n}] - (\nabla_{X_{k}}
	\mathcal{I}(\zeta_{n}), \phi_{j})_{_{H^{1}}} \\
	& = 2 \lambda_j \RE (\xi_n, \varphi_j)_{_{L^2}} -
	(\nabla_{X_{k}} \mathcal{I}(\zeta_{n}), \phi_{j})_{_{H^{1}}} =
	- (\nabla_{X_{k}} \mathcal{I}(\zeta_{n}), \phi_{j})_{_{H^{1}}}
	\to 0
    \end{align*}
    for $j=\, \dots, k-1$.

    We then proceed as in the proof of lemma \ref{lem:potential_term}.
    
    In particular the first step in the proof of that lemma holds also
    here, that is 
    \begin{equation*}
	\abs{(\xi_n, V^{^{2 \times 2}}_{_\text{FW}} \xi_n )_{_{L^2}} -
	(\chi_{_{R}}\xi_n, V^{^{2 \times 2}}_{_\text{FW}} \chi_{_{R}}
	\xi_n )_{_{L^2}}} \to 0 \text{ as } R \to +\infty.
    \end{equation*}
    
    We now use again \eqref{eq:differenziale_commutato} and similar to
    step 2 of lemma \ref{lem:potential_term} and the fact that
    $\zeta_{n}$ is a constrained Palais Smale sequence, we have
        \begin{align*}
	&o_n(1) = \|\nabla_{_{X_{k}}} \mathcal{I} (\zeta_n) \|_{_{H^1}}
	\|\zeta_n \|_{_{H^1}} \geq \abs{(\nabla_{_{X_{k}}} \mathcal{I}
	(\zeta_n) , \chi_{_{R}}^{2} \zeta_n)}_{_{H^1}} \\
	&\geq \abs{d\mathcal{I}(\zeta_n) [\chi^2_{_{R}} \zeta_n]} -
	\abs{\mu_{0}(\zeta_{n}) (\nabla \mathcal{G}(\zeta_{n}) ,
	\chi^2_{_{R}} \zeta_n)_{H^{1}}} - \abs{\sum_{ j=1}^{ k -1}
	\mu_j (\zeta_n) (\nabla \mathcal{G}_j(\zeta_{n}), \chi^2_{_{R}} \zeta_n
	)_{H^{1}}} \\
		& \geq 2 \, \mathcal{I}(\chi_{_{R}} \zeta_n) -
	2\abs{\mu_{0}(\zeta_{n})} \abs{\chi_{_{R}} \xi_n}^{2}_{L^2} - 
	\sum_{ j=1}^{ k -1} \abs{\mu_j (\zeta_n) }\abs{\chi_{_{R}}
	\xi_n}_{L^{2}}  - (I) - (II)  - (III) 
    \end{align*} 
    where as in the proof of lemma \ref{lem:potential_term} (see also lemma \ref{lem:stima_commutatori} , Appendix
    \ref{sec:appB}) we have 
    \begin{align*}
    (I)  &=  2c^{2}\abs{\zeta_{n}\nabla \chi_{_{R}}}^{2}_{_{L^{2}}}  \leq  2 c^2 \sup_{y \in \R^3}  \abs{\nabla \chi_{_{R}}}^{2} \leq \frac{C}{R^2}\\
    (II) &= 2\abs{(V \psi_{n}, [\fw^{-1}, \chi_{_{R}}]  \chi_{_{R}} \fw
	\psi_{n})_{_{L^{2}}}} \\
	&\leq  |V^{1/2} [ \chi_{_{R}} ,U_{_{FW}}^{-1}]
	 \chi_{_{R}} U_{_{FW}} \psi_n |_{_{L^{2}}} |V^{1/2}
	 \psi_n|_{_{L^{2}}} \\
	 & \leq \frac{C}{R}  | V |_{_{L^3_{w}}}  | \psi_n |^2_{_{H^{1/2}}}  \\
	    (III)&= 2 \abs{([\chi_{_{R}}, \fw^{-1}] \fw \psi_{n}, V 
	\fw^{-1} \chi_{_{R}} \fw \psi_{n})_{_{L^{2}}} } \\ 
	&\leq  |V^{1/2} [ \chi_{_{R}} ,U_{_{FW}}^{-1}]
	 U_{_{FW}} \psi_n |_{_{L^{2}}} |V^{1/2} U_{_{FW}}^{-1}
	 \chi_{_{R}} U_{_{FW}} \psi_n |_{_{L^{2}}} \\
	& \leq \frac{C}{R}  | V |_{_{L^3_{w}}}  | \psi_n |^2_{_{H^{1/2}}}  
    \end{align*} 
              (here $\psi_{n} = \fw^{-1}Q\xi_{n}$)
              
  Then  by Sobolev compact  embedding,  for any
    given $R >0$,
    \begin{equation*}
	| \chi_{_{R}} \xi_n |_{_{L^2}} \to 0 \quad \text{as} \quad n
	\to + \infty.  
    \end{equation*}
      Moreover, $ |\mu_j (\zeta_n)| \leq C$ for $j=0, \dots, k-1$.  More precisely,

    Now, since $\mathcal{I}$ is coercive, exactly as in lemma \ref{lem:potential_term} we may conclude
    \begin{equation*}
	\|\chi_{_{R}} \zeta_n \|^2_{_{H^1}}
	\leq \epsilon_{n} + \frac{C}{R}
    \end{equation*} 
and by {\bf (h2)} and lemma \ref{lem:betterminimizer},
    \begin{equation*}
	|( \Vfw \chi_{_{R}} \xi_n , \chi_{_{R}} \xi_n
	)_{_{L^2}} | \leq a \|\chi_{_{R}} \zeta_n \|^2_{_{H^1}} \leq \epsilon_{n} + \frac{C}{R}
    \end{equation*} 
 for $\epsilon_n \to 0 $ as $n \to + \infty$,  $R$ arbitrary large,     and the lemma follows.
\end{proof}
 
We are now ready to prove the following proposition for  the
existence of a minimizer for \eqref{eq:pk}.
 
\begin{prop}
\label{prop:excited_states}
    Let $\zeta_n \in X_k$ be a minimizing Palais Smale sequence for
    \eqref{eq:pk}.
    
    Then $\zeta_n \rightharpoonup \phi_k $ in $H^1$ and $
    |(\phi_k)_{tr}|_{_{L^2}}^{-1} \phi_k \in X_k$ is a minimizer for
    problem \eqref{eq:pk}, and a weak solution of the Neumann problem
    $(\mathcal{E})_{_{k}}$.
\end{prop}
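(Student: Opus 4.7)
The plan is to mirror the scheme of Proposition \ref{prop:ground_state}, adapting it to the constrained minimization on $X_k$. Since $\mathcal{I}$ is coercive on $H^1$ by Lemma \ref{lem:coercive}(i), the sequence $\zeta_n$ is bounded in $H^1$; extracting a subsequence I get $\zeta_n \rightharpoonup \phi_k$ weakly in $H^1$ and, by continuity of the trace, $\xi_n := (\zeta_n)_{tr} \rightharpoonup \varphi_k := (\phi_k)_{tr}$ weakly in $H^{1/2}$.

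The central step is to show that the weak limit is non-trivial, i.e.\ $\varphi_k \not\equiv 0$. I would argue by contradiction, exactly as in the ground state case: if $\xi_n \rightharpoonup 0$ in $H^{1/2}$, then Lemma \ref{lem:potential_term_k} gives $(\xi_n, \Vfw \xi_n)_{L^2} \to 0$. Combined with the kinetic-energy positivity in Remark \ref{eq:positivita_quadratica} and the normalization $|\xi_n|_{L^2}^2 = 1$, this forces $\liminf_n \mathcal{I}(\zeta_n) \geq mc^2$, contradicting the strict inequality $\lambda_k < mc^2$ established in the preceding lemma.

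Next, I would pass to the weak limit in the Euler-Lagrange identity. The linear constraints $\mathcal{G}_j(\zeta_n) = (\xi_n, \varphi_j)_{L^2} = 0$ transfer to $\mathcal{G}_j(\phi_k) = 0$ by weak convergence of $\xi_n$ in $L^2$ against the fixed $\varphi_j$. Lemma \ref{lem:potential_term_k} also yields $\mu_0(\zeta_n) \to \lambda_k$ and $\mu_j(\zeta_n) \to 0$ for $j = 1, \dots, k-1$. Testing $\nabla_{X_k} \mathcal{I}(\zeta_n) \to 0$ against any fixed $h \in H^1$ and letting $n \to \infty$ I obtain $d \mathcal{I}(\phi_k)[h] = 2 \lambda_k \RE(\varphi_k, h_{tr})_{L^2}$ for all $h \in H^1$, which is exactly the weak formulation of the Neumann problem $(\mathcal{E}_{_{k}})$. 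Choosing $h = \phi_k$ yields $\mathcal{I}(\phi_k) = \lambda_k \mathcal{G}(\phi_k)$, so after normalization $\hat{\phi}_k := \mathcal{G}(\phi_k)^{-1/2} \phi_k \in X_k$ realizes $\mathcal{I}(\hat{\phi}_k) = \lambda_k$ and is the sought minimizer.

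The main obstacle is, as for the ground state, the lack of compactness in the embedding $H^{1/2}(\mathbb{R}^3) \hookrightarrow L^2(\mathbb{R}^3)$, which could in principle allow the minimizing sequence to concentrate mass at infinity with vanishing interaction. This is overcome by Lemma \ref{lem:potential_term_k}, whose proof combines the Brown-Ravenhall bound (\textbf{h2}), the commutator estimates for $\fw$ against smooth cut-offs (via the weak-$L^q$ Young inequalities recalled in Lemma \ref{lem:estimate_weakspace}), and the strict gap $\lambda_k < mc^2$ produced by the scaling test functions built under assumption (\textbf{h3}).
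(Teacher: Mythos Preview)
Your proposal is correct and follows essentially the same approach as the paper: you unpack the non-triviality argument (contradiction via Lemma~\ref{lem:potential_term_k}, Remark~\ref{eq:positivita_quadratica}, and the strict gap $\lambda_k < mc^2$) that the paper compresses into ``we proceed as in the proof of Proposition~\ref{prop:ground_state}'', and then pass to the limit in the constrained Euler--Lagrange relation using the multiplier convergence $\mu_0(\zeta_n)\to\lambda_k$, $\mu_j(\zeta_n)\to 0$ from Lemma~\ref{lem:potential_term_k}, exactly as the paper does.
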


\begin{proof}
    We proceed as in the proof of lemma \ref{prop:ground_state} to
    conclude that $\zeta_n \rightharpoonup \phi_k \not \equiv 0 $.
    
    We clearly have that $\mathcal{G}_{j}(\phi_{k}) = 0$ for $j = 1,
    \ldots, k-1$.  We do not know if $\abs{\varphi_{k}}_{L^{2}} = 1$
    (where $\varphi_{k} = (\phi_{k})_{tr}$).
    
   By  lemma  \ref{lem:potential_term_k} we have 
that  
    \begin{equation*}
        \mu_{0}(\zeta_{n}) \to \lambda_{k}  \qquad 
        \mu_{j}(\zeta_{n}) \to 0  \qquad (j= 1, \dots, k-1)
    \end{equation*}
 
 then by weak convergence we then have that for all $h \in H^{1}$, as $n \to + \infty$
    \begin{align*}
	( \nabla_{X_{k}} \mathcal{I}(\zeta_n), h )_{_{H^1}} &=  d\mathcal{I}(\zeta_n) [h]
	 - 2\mu_{0}(\zeta_n) \RE (\xi_n, h_{tr})_{_{L^{2}}} -
	\sum_{ j=1}^{ k -1} \mu_{j}(\zeta_n) (\varphi_{j},
	h_{tr})_{_{L^{2}}} \\
	&\to d\mathcal{I}(\phi_k) [h] - 2 \lambda_k  \RE (\varphi_k,
	h_{tr})_{_{L^2}}  =0.
    \end{align*}
    We deduce, taking $h = \phi_{k}$
    \begin{equation*}
	0= d\mathcal{I}(\phi_k) [\phi_k] - \lambda_k 2
	|\varphi_k|^2_{_{L^2}} = 2 \mathcal{I}(\phi_k) - 2 \lambda_k
	|\varphi_k|^2_{_{L^2}}
    \end{equation*}
    and we conclude that $\abs{\varphi_{k}}_{_{L^{2}}}^{-1}\phi_k \in
    X_{k}$ is a minimizer for \eqref{eq:pk}.
  \end{proof}

To conclude the proof of Theorem \ref{thm:eigenvalues} we prove 
that $\{ \lambda_{k} \}_{_{k \geq 1}} \in
\sigma_{disc}(\mathcal{B}_{_\text{FW}}) $ namely that $\lambda_k$ has
finite multiplicity.  

Indeed suppose that there exists an eigenvalue $\lambda_{k}$ with
infinite multiplicity.  Then there exist a corresponding sequence $\{
\varphi^{(k)}_n\}_{n \in \mathbb{N}} \subset H^{1/2}$ of
eigenfunctions corresponding to the same eigenvalue $\lambda_k$.  We
will assume that $\abs{\varphi_{n}^{(k)}}_{_{L^{2}}} = 1$ for all $n \in
\mathbb{N}$.  Letting
\begin{equation*}
    \phi^{(k)}_{n} =\mathcal{F}_{_{y}}^{-1} \left[ e^{-x\sqrt{m^{2}
    c^4+ c^2 |p|^{2}}} \mathcal{F} [\varphi^{(k)}_n ]
    \right]\in X_{k},
\end{equation*}

by lemma \ref{lem:betterminimizer} we have $\nabla_{X_{k}} \mathcal{I}(\phi^{(k)}_{n}) =0 $ and $\mathcal{I}
( \phi^{(k)}_{n}) = \lambda_k$.  We deduce from this that
$\varphi_{n}^{k}$ is a bounded sequence in $H^{1/2}$, since by orthogonality  $ \varphi^{(k)}_n
\rightharpoonup 0 $ in $L^{2}$, we have $ \varphi^{(k)}_n
\rightharpoonup 0 $   in $H^{1/2}$, therefore by
lemma \ref{lem:potential_term} we get
\begin{equation*}
    ( \varphi^{(k)}_n , V^{^{2 \times 2}}_{_\text{FW}}
    \varphi^{(k)}_n )_{L^2} \to 0
    \qquad \text{as } n \to +\infty
\end{equation*}
and from this we get  a contradiction, namely $\lambda_{k} = \mathcal{I} (\phi^{(k)}_{n})
\geq mc^2$.  
 
Finally since eigenvalues can accumulate only on the essential spectrum, we
may conclude  that
\begin{equation*}
    0 < \lambda_1 \leq ....\leq \lambda_{k-1} \leq \lambda_{k} \, \to
    \, \inf \{ \sigma_{\text{ess}} (\mathcal{B}_{_\text{FW}}) \} =
    mc^2 \quad \text{for} \, \, k \to + \infty .
\end{equation*}
    
\appendix

\section{Multiplication and convolutions on Lorentz spaces}
\label{sec:appendixA}

We refer to \cite{Ziemer} for the definition of the Lorentz space
$L(p,q)$, for $1 \leq p,q \leq \infty $ and the corresponding norm $\|
f \|_{(p,q)}$.  Let us recall here the following facts and
inequalities (see e.g. \cite {Ziemer}, \cite{Tartar_1998} and
references therein for more details)
    
\begin{itemize}
    
    \item for $p=q$ the Lorentz spaces $L(p,p)$ coincide with the
    usual $L^p$-space.
    
    \item for $p >1$ and $q= \infty$ the Lorentz spaces $L(p, \infty)$
    correspond to the weak $L^p$-space $L^p_{w} (\R^N)$ (also known
    as Marcinkiewicz spaces $M_{p} (\R^N)$).
    
    \item {\it multiplication:} Let $f \in L(p_1,q_1)$ and $g \in
    L(p_2,q_2)$ then $fg \in L(r,s)$ with $\frac{1}{p_1} +
    \frac{1}{p_2} = \frac{1}{r}$ and $\frac{1}{q_1} + \frac{1}{q_2} =
    \frac{1}{s}$ for $1 < p_1, p_2 < \infty, 1 \leq q_1, q_2 \leq
    \infty$ .
    
    \item {\it convolution:} Let $f \in L(p_1,q_1)$ and $g \in
    L^{(p_2,q_2)} (\R^N)$ then $f \ast g \in L(r,s)$ with
    $\frac{1}{p_1} + \frac{1}{p_2} = \frac{1}{r} +1 $ and
    $\frac{1}{q_1} + \frac{1}{q_2} \geq \frac{1}{s} $ for $1 < p_1,
    p_2 < \infty, 1 \leq q_1, q_2 \leq \infty$ and $\frac{1}{p_1} +
    \frac{1}{p_2} >1$.  Moreover
    \begin{equation*}
	\|f \ast g \|_{(r,s)} \leq 3r \|f \|_{(p_1, q_1)} \|g
	\|_{(p_2, q_2)} .
    \end{equation*}

\end{itemize}
    
Finally let us point out the following generalization of the weak
Young inequality.

\begin{prop}[\protect{see \cite[thm. 2.10]{Kovalenko_1981}}]
    \label{prop: Kovalenko}
    Let $f \in L^{q}_{w}(\R^N)$, $g \in L^{q'}_{w}(\R^N)$ and $h \in
    L^{p}(\R^N)$ with $\frac{1}{q} + \frac{1}{q'} = 1$ and $1 < p < q
    $.  Then
    \begin{equation}
	\| f (g \ast h) \|_{p} \leq C \|f\|_{q,w} \|g\|_{q',w} \|h
	\|_{p}.
    \end{equation}
\end{prop}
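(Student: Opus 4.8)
The plan is to obtain the estimate by composing two classical Lorentz-space inequalities, both recalled at the beginning of this appendix: O'Neil's convolution inequality and Hölder's (multiplication) inequality on the Lorentz scale. Rewriting the hypotheses in Lorentz notation, $f\in L(q,\infty)$, $g\in L(q',\infty)$ and $h\in L(p,p)$, with $1/q+1/q'=1$ and $1<p<q<\infty$; the whole argument will consist in applying the two quoted inequalities in succession and checking that the exponents produced are admissible.

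First I would apply the convolution inequality to $g\ast h$ with exponent pairs $(p_1,q_1)=(q',\infty)$ and $(p_2,q_2)=(p,p)$. The admissibility condition $1/p_1+1/p_2>1$ reads $1/q'+1/p>1$, which is exactly equivalent to $p<q$; moreover $1/q'+1/p-1<1$ since $q',p>1$, so the resulting first exponent $r$, determined by $1/r=1/q'+1/p-1$, lies in $(1,\infty)$. Choosing the second exponent to be $p$ (permissible since $1/\infty+1/p\ge 1/p$), one gets $g\ast h\in L(r,p)$ together with $\|g\ast h\|_{(r,p)}\le 3r\,\|g\|_{q',w}\|h\|_p$.

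Then I would multiply by $f$ and invoke the multiplication property with $(p_1,q_1)=(q,\infty)$ and $(p_2,q_2)=(r,p)$. The target first exponent $t$ satisfies $1/t=1/q+1/r=1/q+1/q'+1/p-1=1/p$, using $1/q+1/q'=1$, and the target second exponent is $1/\infty+1/p=1/p$; hence $f(g\ast h)\in L(p,p)=L^p$ with $\|f(g\ast h)\|_p\le C\,\|f\|_{q,w}\|g\ast h\|_{(r,p)}$. Chaining this with the previous bound yields $\|f(g\ast h)\|_p\le C\,\|f\|_{q,w}\|g\|_{q',w}\|h\|_p$, which is the claim. The only delicate points are verifying that every exponent produced lies in the open interval $(1,\infty)$ required by the quoted Lorentz inequalities --- this is where the strict hypotheses $1<p<q$ enter, in particular to guarantee $r>1$ --- and keeping track of the second Lorentz index in the multiplication step so that the product lands precisely in $L(p,p)=L^p$ rather than in a strictly larger Lorentz space; the reference \cite{Kovalenko_1981} carries out essentially this reduction.
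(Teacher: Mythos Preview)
Your argument is correct. The paper does not actually supply a proof of this proposition; it simply states the result and refers to \cite[thm.~2.10]{Kovalenko_1981}. What you have done is to derive the inequality from the two Lorentz-space facts (the O'Neil convolution estimate and the multiplication property) that the paper lists immediately before the proposition, which is almost certainly the derivation the authors had in mind when they collected those facts there. Your verification of the admissibility of the exponents --- in particular that $1/q'+1/p>1$ is equivalent to $p<q$, and that the intermediate index $r$ determined by $1/r=1/p-1/q$ lies in $(1,\infty)$ --- is exactly what is needed to make the two quoted inequalities applicable, and the arithmetic $1/q+1/r=1/p$ together with $1/\infty+1/p=1/p$ lands the product precisely in $L(p,p)=L^p$.
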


\section{Estimates on commutators}
\label{sec:appB}
        
\begin{lem}
    \label{lem:stima_commutatori}      
    Let $\chi \in C_0^{\infty}(\R^3)$ and for $R >0$ let define
    $\chi_{_{R}}(y) = \chi ( R^{-1} y)$.  Then the operator $ [
    \chi_{_{R}} ,\fw^{-1}] \fw : H^{1/2} (\R^3; \C^4) \to H^{1/2} (\R^3; \C^4)$
    satisfies
    \begin{equation*}
	\| [ \chi_{_{R}} ,\fw^{-1}] \fw \| = O(R^{-1})  \qquad \text{as} \quad R \to + \infty.
    \end{equation*}
\end{lem}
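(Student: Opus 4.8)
The plan is to estimate the commutator $[\chi_R,\fw^{-1}]\fw$ directly on the Fourier side, exploiting the fact that $\fw = \mathcal{F}^{-1}U(p)\mathcal{F}$ is a Fourier multiplier with a smooth, bounded symbol $U(p)$ whose gradient decays. First I would write, for $\psi \in \mathcal{S}(\R^3;\C^4)$,
\begin{equation*}
    [\chi_R,\fw^{-1}]\fw\,\psi = \chi_R \fw^{-1}\fw\,\psi - \fw^{-1}\chi_R\fw\,\psi = \chi_R\,\psi - \fw^{-1}\chi_R\fw\,\psi,
\end{equation*}
so the operator is $\I - \fw^{-1}M_{\chi_R}\fw$ compressed appropriately; more usefully, $[\chi_R,\fw^{-1}]\fw = -\fw^{-1}[\chi_R,\fw]\fw^{-1}\cdot\fw = -\fw^{-1}[\chi_R,\fw]$. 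Since $\fw^{-1}$ is unitary on $H^{1/2}$ (as $U(p)^{-1}$ is unitary pointwise and commutes with the multiplier $(1+|p|)^{1/2}$), it suffices to bound $\|[\chi_R,\fw]\|_{H^{1/2}\to H^{1/2}}$. The commutator $[M_{\chi_R},\fw]$ has integral kernel
\begin{equation*}
    K_R(y,y') = \frac{1}{(2\pi)^3}\int_{\R^3} e^{i(y-y')\cdot p}\,\bigl(\chi_R(y)-\chi_R(y')\bigr)U(p)\,dp,
\end{equation*}
and the factor $\chi_R(y)-\chi_R(y')$ vanishes to first order, producing the gain of $R^{-1}$ from $\|\nabla\chi_R\|_\infty = R^{-1}\|\nabla\chi\|_\infty$.

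The cleanest route is to decompose $U(p) = a_+(p)\I_4 + a_-(p)\,\mathbf{\beta}\,\frac{\underline{\mathbf{\alpha}}\cdot p}{|p|}$ and treat the two pieces. For the scalar part: $a_+(p) - 1$ is smooth, $O(|p|^{-2})$ at infinity, hence $M_{\chi_R}$ commutes with it up to an operator that is bounded $H^{1/2}\to H^{1/2}$ with norm $O(R^{-1})$ — indeed $[\chi_R, m(D)]$ for a symbol $m$ with $m, \nabla m$ integrable-type decay is a standard Coifman–Meyer / pseudodifferential commutator estimate, and the $R$-scaling of $\chi_R$ gives the $R^{-1}$. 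The genuinely delicate term is the $a_-(p)\,\mathbf{\beta}\frac{\underline{\mathbf{\alpha}}\cdot p}{|p|}$ part, because $\frac{p}{|p|}$ is the (matrix-valued) Riesz-transform symbol, which is not smooth at the origin; however $a_-(p) = O(|p|)$ near $0$ kills the singularity, so $a_-(p)\frac{p}{|p|}$ is a Mikhlin-type symbol of order $0$ that is Lipschitz away from $0$ and $O(1)$ globally — I would write it as $a_-(p)\frac{p}{|p|} = \frac{a_-(p)}{|p|}\cdot p$ where $\frac{a_-(p)}{|p|}$ is smooth and bounded, reducing to commutators of $\chi_R$ with first-order multipliers $\partial_j$ against the smooth multiplier $\frac{a_-(|\cdot|)}{|\cdot|}$; the commutator $[\chi_R,\partial_j] = (\partial_j\chi_R)$ is just multiplication by a function of $L^\infty$-norm $O(R^{-1})$, and multiplication by a Schwartz-type bounded function maps $H^{1/2}\to H^{1/2}$ boundedly. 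Assembling: $\|[\chi_R,\fw]\psi\|_{H^{1/2}} \le C R^{-1}\|\psi\|_{H^{1/2}}$.

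Concretely the key steps, in order, are: (1) reduce $\|[\chi_R,\fw^{-1}]\fw\|$ to $\|[\chi_R,\fw]\|$ via unitarity of $\fw^{\pm1}$ on $H^{1/2}$; (2) split $U(p)$ into its $a_+\I_4$ part and its $a_-\,\mathbf{\beta}\frac{\underline{\mathbf{\alpha}}\cdot p}{|p|}$ part; (3) for the $a_+$ part, use a standard commutator estimate for a smooth decaying multiplier, extracting $R^{-1}$ from the scaling $\chi_R(y)=\chi(y/R)$; (4) for the $a_-$ part, rewrite the symbol as $\frac{a_-(p)}{|p|}\cdot p$ and peel off the first-order factor as $\sum_j [\chi_R,\partial_j](\cdots) = \sum_j (\partial_j\chi_R)(\cdots)$, again gaining $R^{-1}$ from $\|\partial_j\chi_R\|_\infty$, then bound the remaining $0$-order multiplier and multiplication operators on $H^{1/2}$; (5) combine and conclude $\|[\chi_R,\fw^{-1}]\fw\| = O(R^{-1})$. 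The main obstacle I anticipate is step (4): handling the non-smoothness of $\frac{p}{|p|}$ at the origin cleanly, and verifying that the residual $0$-order multiplier composed with multiplication by $(\partial_j\chi_R)$ really is bounded on $H^{1/2}$ with the scaling constant one wants — this is where one must be careful to use that $a_-(p)/|p|$ is genuinely smooth (its Taylor expansion at $0$ has no odd terms in a suitable sense) so that the Mikhlin multiplier theorem, or a direct kernel estimate on $K_R$, applies uniformly in $R$.
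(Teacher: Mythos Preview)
Your step (4) has a genuine gap. Writing the symbol as $g(p)\,p_j$ with $g(p) = a_-(p)/|p|$ and ``peeling off'' the derivative is not legitimate: the Leibniz rule gives $[\chi_R,\, g(D)\partial_j] = [\chi_R, g(D)]\,\partial_j + g(D)\,[\chi_R,\partial_j]$, and you account only for the second term. The first term $[\chi_R, g(D)]\,\partial_j$ is not harmless --- $\partial_j$ costs a full derivative, so you would need $[\chi_R, g(D)]\colon H^{-1/2}\to H^{1/2}$ with norm $O(R^{-1})$, i.e.\ a genuine smoothing commutator estimate for the order-zero symbol $g$. That is precisely the kind of ``standard'' estimate you invoke for $a_+$ in step (3). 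The decomposition therefore does not bypass the hard part; it merely relocates it. If you are willing to cite a pseudodifferential lemma of the form $\|[\chi_R, m(D)]\|_{H^s\to H^{s+1}} = O(R^{-1})$ for $m \in S^0_{1,0}$, you can close both (3) and (4) at once --- but then you may as well apply it directly to the full matrix symbol $U(p)$, whose entries are smooth with $|\partial^\alpha U(p)| \leq C_\alpha (1+|p|)^{-|\alpha|}$, and the $a_\pm$ split becomes unnecessary. (Minor correction: $a_+(p)\to 1/\sqrt 2$ as $|p|\to\infty$, so $a_+(p)-1$ does not decay; the relevant quantity is $a_+(p)-1/\sqrt 2 = O(|p|^{-1})$, or better just $\nabla a_+$.)

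The paper takes a different, entirely elementary route: it writes the Fourier transform of $[\chi_R,\fw^{-1}]\fw\,\psi$ as the convolution $U^{-1}(p)\int_{\R^3}\hat\chi_R(q)\bigl(U(p)-U(p-q)\bigr)\hat\psi(p-q)\,dq$, rescales $p\mapsto R^{-1}p$, and then proves by explicit algebra the pointwise bound $\bigl|U(R^{-1}p) - U(R^{-1}(p-q))\bigr| \leq C|q|/R$, treating the $a_+$ piece and each $a_-\,p_i/|p|$ piece separately by hand (the latter requires controlling differences of the Riesz-type factor $p_i/|p|$). The extra $|q|$ is then absorbed by the rapid decay of $\hat\chi$ (since $\chi\in C^\infty_0$), and a Cauchy--Schwarz argument in $p$ closes the $H^{1/2}$ estimate. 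No black-box commutator theorem is invoked; the proof is self-contained at the cost of heavier computation. Your approach would be cleaner if the missing lemma is supplied, but as written it is incomplete exactly where you anticipated trouble.
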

      
\begin{proof}
    We have 
    \begin{equation*}
	\norm{[ \chi_{_{R}} ,\fw^{-1}] \fw \psi}^2_{_{H^{1/2}}} =
	\int_{\R^3} dp \, (1 + |p|) \left| \mathcal{F}( [ \chi_{_{R}}
	,\fw^{-1}] \fw \psi ) \right|^2
    \end{equation*}
    where
    \begin{align*}
	\mathcal{F}( [ \chi_{_{R}} ,\fw^{-1}] \fw \psi ) (p) &=
	\mathcal{F}( \chi_{_{R}} \psi ) (p) - \mathcal{F}( \fw^{-1}
	\chi_{_{R}} \fw \psi ) (p) \\
	&= (\hat{\chi}_{_{R}} * \hat{\psi}) (p) - U^{-1}(p)
	\mathcal{F} (\chi_{_{R}} \fw \psi ) (p) \\
	&= \int_{\mathbb{R}^{3}} R^{3}\hat{\chi}(Rq) \hat{\psi}(p-q)
	\, dq - U^{-1}(p) (\hat{\chi}_{_{R}} * (U \hat{\psi} )) (p) \\
	&=U^{-1}( p) \int_{\R^3} dq \, R^3 \, \hat{\chi} (Rq) \left(U(
	p) - U(p-q) \right) \hat{ \psi}(p - q ) \, dq \\
    \end{align*}
    where $\hat{\chi}$ is the Fourier transform of $\chi \in
    C_0^{\infty}(\R^3)$ (and hence $\hat{\chi}_{_{R}}(p) =
    R^{3}\hat{\chi}(Rp)$).

    Now, let define $\Rfw = \mathcal{F}^{-1} U(R^{-1} p)\mathcal{F} $
    and $ \hat{ \psi}_{_R}(q )= R^{- 3/2} \hat{ \psi} ( R^{- 1} q) $
    where $\hat{\psi}$ is the Fourier transform of $\psi \in
    H^{1/2}(\R^3)$.  By rescaling variables we get
    \begin{multline*}
	\norm{[ \chi_{_{R}} ,\fw^{-1}] \fw \psi}^2_{_{H^{1/2}}} \\
	= \int_{\R^3} dp \, (1 + R^{-1} |p|) \labs{U^{-1}( R^{- 1}p)
	\int_{\R^3} dq \, \hat{\chi}(q)K_{R}(p, q) \hat{
	\psi}_{_R}(p-q)}^2 \\
	= \int_{\R^3} dp \, (1 + R^{-1} |p|) \labs{
	\int_{\R^3} dq \, \hat{\chi}(q)K_{R}(p, q) \hat{
	\psi}_{_R}(p-q)}^2 \\
    \end{multline*}
 where
    \begin{align*}
	K_{R}(p,q) &= U( R^{- 1} p) - U( R^{- 1}(p-q)) \\
	& = \left( a_{+} (R^{- 1} p) - a_{+} ( R^{- 1}(p-q)) \right)
	\, \I_{4} \\
	&\qquad + \sum_{i=1}^{3} \left(a_{-} (R^{- 1} p )  \frac{p_i}{|p|} - a_{-} ( R^{- 1}(p-q))
	\frac{(p_i-q_i)}{|p-q|}  
	\right) \beta \mathbf{\alpha}_i \\
	&=: K_{+,R}(p,q) \I_{4} + \sum_{i=1}^{3}K^{i}_{-,R}(p,q) \beta
	\mathbf{\alpha}_i
    \end{align*}
    and $a_{\pm}(R^{- 1}p) = \sqrt{\frac{1}{2}( 1 \pm mc^2 /
    \lambda(R^{- 1}p))}$.  
    
    Letting $\lambda_{_{R}}( p) = \lambda(R^{-
    1} p) = \sqrt{c^2 R^{-2}|p|^2 + m^2 c^4}$ we have
    \begin{align*}
	| K_{+,R} &(p,q) | = |a_{+} (R^{- 1} p) - a_{+} ( R^{-
	1}(p-q)) | = \frac{|a^2_{+} (R^{- 1} p) - a^2_{+} ( R^{-
	1}(p-q)) |}{ a_{+} (R^{- 1} p) + a_{+} ( R^{- 1}(p-q))} \\
	& \leq \frac{m c^2}{2 \sqrt{2}} \left|
	\frac{1}{\lambda_{_{R}}(p)} - \frac{1}{\lambda_{_{R}}(p-q)}
	\right| \leq \frac{m c^2}{2 \sqrt{2}} \frac{ |\lambda_{_{R}}^2
	(p-q) - \lambda_{_{R}}^2(p) |}
	{\lambda_{_{R}}(p)\lambda_{_{R}}(p-q)( \lambda_{_{R}}(p) +
	\lambda_{_{R}}(p-q) )} \\
	& \leq \frac{m c^2}{2 \sqrt{2}} \frac{c^2 R^{-2}| |p-q|^2 -
	|p|^2|}{\lambda_{_{R}}( p)\lambda_{_{R}}(p-q) cR^{-1}(|p| +
	|p-q|) }  \leq
	\frac{\sqrt{2} |q|}{ 4 m c R}
    \end{align*}    
    and analogously, 
  
    \begin{align*}
	| K^{i}_{-,R}&(p,q) |  \leq \left| a_{-} (R^{- 1} (p-q))
	\frac{ (p_i-q_i)}{|p-q|} - a_{-} ( R^{- 1}p) \frac{ p_i}{|p|}
	\right| \\
	& \leq \, |p| \left | \frac{a_{-} (R^{- 1} (p-q))}{|p-q|} -
	\frac{a_{-} ( R^{- 1}p) }{|p|} \right| + |q| \frac{ a_{-}
	(R^{- 1} (p-q)) }{|p-q|} \\
	& \leq \frac{1}{\sqrt{2}} \left| \frac{|p|}{|p-q|} \left(
	\frac{\lambda_{_{R}}(p-q) - mc^2}{\lambda_{_{R}} (p-q) }
	\right)^{\frac{1}{2}} - \left( \frac{\lambda_{_{R}}( p) -
	mc^2}{\lambda_{_{R}}( p )} \right)^{\frac{1}{2}} \right| \\
	& \qquad + \frac{1}{\sqrt{2}} \frac{|q|}{|p-q|} \left(
	\frac{\lambda_{_{R}} (p-q) - mc^2}{\lambda_{_{R}}(p-q)}
	\right)^{\frac{1}{2}} \\
	& \leq \frac{|p|}{\sqrt{2}|p-q|}  \left|\left(
	\frac{c^2 R^{-2} |p-q|^2}{\lambda_{_{R}} (p-q) (\lambda_{_{R}}
	(p-q) + mc^2)} \right)^{\frac{1}{2}} - \left( \frac{c^2 R^{-2}
	|p|^2}{\lambda_{_{R}}( p )(\lambda_{_{R}} (p) + mc^2)}
	\right)^{\frac{1}{2}} \right| \\
	& \qquad + \frac{1}{\sqrt{2}} \frac{|q|}{|p-q|} \left(
	\frac{c^2 R^{-2} |p-q|^2}{\lambda_{_{R}}(p-q) (\lambda_{_{R}}
	(p-q) + mc^2)} \right)^{\frac{1}{2}} \\
		& \leq \frac{1}{\sqrt{2}} \frac{c |p| }{ R} \frac{ \left|
	\left( \lambda_{_{R}}( p )(\lambda_{_{R}} (p) + mc^2)
	\right)^{\frac{1}{2}} - \left( \lambda_{_{R}} (p-q) (\lambda_{_{R}}
	(p-q) + m c^2) \right)^{\frac{1}{2}} \right| }{ \lambda_{_{R}} (p-q)
	\lambda_{_{R}}( p ) } \\
	& \qquad + \frac{1}{\sqrt{2}} \frac{c }{ R} \frac{|q|}{mc^2}
	\\
	& \leq \frac{1}{\sqrt{2}} \frac{c |p| }{ R \lambda_{_{R}}( p ) \lambda_{_{R}} (p-q)
	} \left ( \frac{ | \lambda^2_{_{R}}( p ) - \lambda^2_{_{R}}
	(p-q) | } { \lambda_{_{R}} (p) +
	\lambda_{_{R}} (p-q)  } + mc^2
	\frac{| \lambda^2_{_{R}}( p ) - \lambda^2_{_{R}} (p-q) | } { (
	\lambda_{_{R}} (p) + \lambda_{_{R}} (p-q) )^2 } \right) \\
	& \qquad + \frac{1}{\sqrt{2}} \frac{|q|}{mc R}\\
	& \leq \frac{\sqrt{2}}{mcR} \frac{ | | p |^2 - |p-q|^2 | } {
	|p| + |p-q| } + \frac{1}{\sqrt{2}} \frac{|q|}{mc R} \leq
	 \frac{3 \sqrt{2}|q|}{2mc R}.
    \end{align*}    

    Therefore we may conclude that,
    \begin{equation*}
	| K_{R}(p, q)| \leq | K_{+,R}(p,q) \I_4| + \sum_{i=1}^{3} |
	K^{i}_{-,R}(p,q) \beta \alpha_i | \leq \frac{5\sqrt{2} |q|
	}{mc R}
    \end{equation*}
    and, in particular we get
    \begin{equation*}
	\sup_{(p,q) \in \R^3 \times \R^3} \frac{ | K_{R}(p, q)| }{1 +
	|q|} \leq \frac{C}{R} .
    \end{equation*}
    Moreover, since $ \hat{\chi}$ is the Fourier transform of the
    compact support function $\chi \in C_0^{\infty}(\R^3)$, it decays
    at infinity faster than any power, namely for any $\alpha >0$
    there exists a positive constant $C_{\alpha} >0$ such that
    \begin{equation}
	\label{eq:decay2}
	| \hat{\chi}(q)| \leq \frac{C_{\alpha} }{(1 + |q|)^{\alpha}}.
    \end{equation}  
 
    Then, by using H\"older inequality we get
    \begin{align*}
	& \norm{[ \chi_{_{R}} ,\fw^{-1}] \fw \psi}^2_{_{H^{1/2}}} =
	\int_{\R^3} dp \, (1 + R^{-1} |p|) \left| 
	\int_{\R^3} dq \, \hat{\chi}(q)K_{R}(p, q) \hat{
	\psi}_{_R}(p-q) \right|^2 \\
	& = \int_{\R^3} dp \, (1 + R^{-1}|p|) \left|\int_{\R^3} dq \,
	\hat{\chi}(q) K_{R}(p, q) \hat{ \psi}_{_R}(p-q) \right|^2\\
	& \leq \sup_{(p,q) \in \R^3 \times R^3} \frac{ | K_{R}(p,
	q)|^2 }{(1 + |q|)^2} \iint_{\R^3} dq_1 dq_2 (1 + |q_1|) |
	\hat{\chi}(q_1)| (1 + |q_2|) | \hat{\chi}(q_2) |\quad \times
	\\
	&  \int_{\R^3} dp \, (1 + \frac{|p-q_1|}{R} + \frac{
	|q_1|}{R})^{\frac{1}{2}} |\hat{ \psi}_{_R}(p-q_1 ) | (1 + \frac{|p-q_2|}{R} +
	\frac{|q_2|}{R})^{\frac{1}{2}} | \hat{ \psi}_{_R}(p-q_2 ) | \\
	& \leq \frac{C}{R^2} \left( \int_{\R^3} dq \, (1 + |q|) |
	\hat{\chi}(q)| \left( \int_{\R^3} dp \, (1 + \frac{|p-q|}{R} +
	\frac{|q|}{R}) | \hat{ \psi}_{_R}(p-q) |^2 \right)^{\frac{1}{2}}
	\right)^2
    \end{align*}    
    Note that,
    \begin{align*}
	\int_{\R^3} dp \, &(1 + R^{- 1}|p-q| + R^{- 1}|q|) | \hat{
	\psi}_{_R}(p-q) |^2 = \int_{\R^3} dp \, (1 + R^{- 1}|p-q| ) |
	\hat{ \psi}_{_R}(p-q) |^2 \\
	& \qquad \qquad + R^{- 1}|q| \int_{\R^3} dp \, | \hat{
	\psi}_{_R}(p-q) |^2 \\
	& \quad = \int_{\R^3} dp (1 + R^{- 1}|p| ) | \hat{
	\psi}_{_R}(p) |^2 + R^{- 1}|q| \int_{\R^3} dp \, | \hat{
	\psi}_{_R}(p) |^2 \\
	& \quad = \int_{\R^3} dp (1 + R^{- 1}|p| ) R^{- 3} | \hat{
	\psi}(R^{- 1} p) |^2 + R^{- 1}|q| \int_{\R^3} dp \, R^{- 3} |
	\hat{ \psi}(R^{- 1} p) |^2 \\
	& \quad = \norm{\psi}^2_{H^{1/2}} + R^{- 1}|q| | \psi|^2_{L^2}
	\leq (1 + R^{- 1}|q| ) \norm{\psi}^2_{H^{1/2}}
    \end{align*}    
    Hence, for $R >1$, we may conclude
    \begin{align*}
	\norm{[ \chi_{_{R}} ,\fw^{-1}]  & \fw \psi}_{_{H^{1/2}}} \\
	&\leq
	\frac{C}{R} \int_{\R^3} dq \, (1 + |q|) | \hat{\chi}(q)|
	\left( \int_{\R^3} dp \, (1 + \frac{|p-q|}{R} + \frac{|q|}{R}) |
	\hat{ \psi}_{_R}(p-q) |^2 \right)^{\frac{1}{2}} \\
	& \leq \frac{C}{R} \norm{\psi}_{H^{1/2}} \int_{\R^3} dq \, (1 +
	|q|) | \hat{\chi}(q)| (1 + R^{- 1}|q| )^{1/2} \\
	& \leq \frac{C}{R} \norm{\psi}_{H^{1/2}} \int_{\R^3} dq \, (1
	+ |q|)^{3/2} | \hat{\chi}(q)| \leq \frac{C}{R}
	\norm{\psi}_{H^{1/2}}.
    \end{align*}    
\end{proof}

\bibliographystyle{amsplain}

\end{document}